\DeclarePairedDelimiter{\ceil}{\lceil}{\rceil}
\DeclarePairedDelimiter{\floor}{\lfloor}{\rfloor}
\newtheorem{thm}{Theorem}[section]
\newtheorem{prop}[thm]{Proposition}
\newtheorem{lem}[thm]{Lemma}
\newtheorem{cor}[thm]{Corollary}
\theoremstyle{definition}
\newtheorem{definition}[thm]{Definition}
\theoremstyle{remark}
\newtheorem{remark}[thm]{Remark}
\numberwithin{equation}{section}
\newcommand{\bQ}{\mathbb{Q}}
\newcommand{\bP}{\mathbb{P}}
\newcommand\OO{{\mathcal{O}}}
\newcommand{\bR}{{\mathbb R}}
\newcommand\lct{{\rm{lct}}}
\begin{document}

\title{Boundedness of varieties of Fano type with alpha-invariants and volumes bounded below}
\date{\today}

\author{Weichung Chen}
\address{Graduate School of Mathematical Sciences, the University of Tokyo, Tokyo 153-8914, Japan.}
\email{chenweic@ms.u-tokyo.ac.jp}

\begin{abstract}
We show that fixed dimensional klt weak Fano pairs with alpha-invariants and volumes bounded away from $0$ and the coefficients of the boundaries belong to the set of hyperstandard multiplicities $\Phi(\mathscr{R})$ associated to a fixed finite set $\mathscr{R}$ form a bounded family. We also show $\alpha(X,B)^{d-1}\text{vol}(-(K_X+B))$ are bounded from above for all klt weak Fano pairs $(X,B)$ of a fixed dimension $d$.
\end{abstract}

\keywords{}
\maketitle
\pagestyle{myheadings} \markboth{\hfill  Weichung Chen
\hfill}{\hfill  \hfill}


\section{Introduction}
Throughout this paper, we work over an uncountable algebraically closed field
of characteristic 0, for instance, the complex number field $\mathbb{C}$.

In the birational geometry, as the first step of moduli theory, it is interesting to consider whether a certain kind of family of varieties satisfy certain finiteness. For varieties of Fano type with bounded log discrepancies, Birkar shows in {\cite [Theorem 1.1]{Bir16b}} that
\begin{thm}\label{bab}
Fix a positive integer $d$ and a positive real number $\epsilon$. The projective varieties $X$ satisfying
\begin{enumerate}
\item $\dim X=d,$
\item there exists a boundary $B$ such that $(X, B)$ is $\epsilon$-lc,
\item $-(K_X+B)$ is nef and big,
\end{enumerate}
form a bounded family.
\end{thm}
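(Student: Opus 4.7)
The plan is to combine three ingredients --- bounded complements, effective birationality, and an $\epsilon$-lc volume bound --- proceeding by induction on the dimension $d$ after a standard MMP reduction.

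First, by running a suitable MMP and using that $-(K_X+B)$ is nef and big, I may replace $(X,B)$ by a $\mathbb{Q}$-factorial weak Fano model on which $-(K_X+B)$ remains nef and big and $(X,B)$ is still $\epsilon$-lc. Boundedness of the original $X$ will follow from boundedness of this model, so I assume from now on that we are in this setting and, after a further small modification, that $-(K_X+B)$ is ample.

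The heart of the argument is the effective birationality statement: there exists an integer $m=m(d,\epsilon)$ such that $|-m(K_X+B)|$ defines a birational map for every $(X,B)$ in the family. To prove this I would first develop the theory of bounded $n$-complements, that is, produce $n=n(d,\epsilon)$ and $B^{+} \geq B$ with $(X,B^{+})$ lc and $n(K_X+B^{+}) \sim 0$. From such a complement one extracts an lc place whose center $S \subset X$ has dimension $<d$, and by adjunction and lifting of sections from $S$ --- to which the inductive hypothesis applies --- one builds enough sections of $-m(K_X+B)$ to separate two general points.

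Next I would bound $\Vol(-(K_X+B))$ from above in terms of $d$ and $\epsilon$. The idea is that if the volume were very large, one could concentrate a general section of a large multiple of $-(K_X+B)$ through a fixed smooth point $x$ and obtain a boundary with multiplicity at $x$ exceeding $d/\epsilon$, contradicting the $\epsilon$-lc assumption. Combined with the effective birationality, a bounded $m$ together with a bounded volume of $-m(K_X+B)$ places $(X,\Supp B)$ in a log bounded family, from which boundedness of $X$ itself follows by a standard Hilbert scheme argument.

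The main obstacle is unquestionably the effective birationality step. Uniform control of $m$ across the entire family requires the full complement theory in all lower dimensions, and the induction is delicate: the lc center $S$ produced from a complement is klt only after adjunction, and the resulting boundary coefficients on $S$ must remain inside a controlled hyperstandard set in order for the induction to close. This interplay between complements, hyperstandard coefficients, and descent in dimension is the technical core of the argument.
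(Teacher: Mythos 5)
You should first be aware of a mismatch with the paper: Theorem \ref{bab} is the Borisov--Alexeev--Borisov conjecture, which this paper does not prove at all --- it is quoted from Birkar [Bir16b, Theorem 1.1] and used as background. So there is no in-paper argument to compare against; what you have written is a coarse outline of Birkar's own two-paper proof (boundedness of complements and effective birationality from [Bir16a], volume bounds and the passage from birational boundedness to boundedness from [Bir16b]). As a proposal it names the right ingredients, but it is not a proof: each ingredient you say you ``would develop'' --- bounded $n$-complements, uniform effective birationality via adjunction to lc centers and induction, the uniform volume bound --- is itself a major theorem occupying most of those papers, and nothing in your sketch supplies the actual mechanisms (hyperstandard coefficient control under adjunction, lifting of complements from lc centers, boundedness of lc thresholds in bounded families).

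Two steps would moreover fail as stated. For the volume bound, producing an effective $D\sim_{\mathbb{Q}}-(K_X+B)$ (or a small multiple) with multiplicity greater than $d/\epsilon$ at a smooth point contradicts nothing: the $\epsilon$-lc hypothesis concerns $(X,B)$, not $(X,B+tD)$, so high multiplicity of an anticanonical member is not by itself absurd. To turn it into a contradiction one needs uniform control of log canonical thresholds of anticanonical systems over the (not yet bounded) family --- exactly the ``singularities of linear systems'' machinery that is the hardest part of [Bir16b], proved by an induction intertwined with BAB itself. Secondly, ``a bounded $m$ together with a bounded volume places $(X,\Supp B)$ in a log bounded family, from which boundedness of $X$ follows by a standard Hilbert scheme argument'' conflates log birational boundedness with boundedness: effective birationality plus a volume bound only yields a birationally bounded family, and descending from that to boundedness of the $\epsilon$-lc models themselves is another substantial step (using complements again, running MMPs over the bounded family, and results in the spirit of [HX15]); it is not a Hilbert scheme formality. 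So the proposal is a reasonable road map of Birkar's strategy, but the gaps it leaves open are precisely where the theorem's difficulty lies.
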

Theorem \ref{bab} was known as the Borisov-Alexeev-Borisov (BAB) Conjecture for decades before Birkar proved it. Equivalently, we can state Theorem \ref{bab} in the following form of boundedness of varieties of Calabi--Yau type.

\begin{thm}\label{bab2}
Fix a positive integer $d$ and a positive real number $\epsilon$. The projective varieties $X$ satisfying
\begin{enumerate}
\item $\dim X=d,$
\item there exists a boundary $B$ such that $(X, B)$ is $\epsilon$-lc,
\item $K_X+B \sim_{\mathbb{R}}0$ and $B$ is big,
\end{enumerate}
form a bounded family.
\end{thm}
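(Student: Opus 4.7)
The plan is to deduce Theorem \ref{bab2} from Theorem \ref{bab} via a boundary perturbation; the reverse implication is symmetric. Given $(X, B)$ with $K_X + B \sim_{\mathbb{R}} 0$, $(X,B)$ being $\epsilon$-lc, and $B$ big, I will construct a new boundary $B'$ on the same $X$ so that $(X, B')$ is $\epsilon'$-lc with $-(K_X + B')$ nef and big, for some $\epsilon' = \epsilon'(\epsilon, d) > 0$. Applying Theorem \ref{bab} to $(X, B')$ then yields the boundedness of $X$.

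The construction proceeds via Kodaira's lemma. Since $B$ is big, I may write
\[
B \sim_{\mathbb{R}} A + E
\]
with $A$ an ample $\mathbb{Q}$-divisor and $E \ge 0$ effective. For a small rational $t \in (0, 1)$, set
\[
B' := (1 - t) B + t E.
\]
Then $B' \ge 0$, and a direct computation gives
\[
K_X + B' \sim_{\mathbb{R}} (K_X + B) - tA \sim_{\mathbb{R}} -tA,
\]
so $-(K_X + B')$ is ample, hence nef and big. The converse direction, Theorem \ref{bab} $\Rightarrow$ Theorem \ref{bab2}, is obtained by applying Kodaira's lemma to the big divisor $-(K_X + B)$ and adding the resulting effective representative back to $B$.

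The main obstacle is preserving the log-discrepancy bound uniformly in $(X, B)$. The coefficients of the divisor $E$ supplied by Kodaira's lemma are not a priori bounded, so mixing $tE$ into the boundary can in principle decrease the log discrepancies of $(X, B')$ below $\epsilon$ even for small $t$. To overcome this, I would exploit the openness of bigness: since $-K_X \sim_{\mathbb{R}} B$ is big, the class $-K_X - \delta A$ remains big for any sufficiently small $\delta > 0$, so there exists an effective $\mathbb{R}$-divisor $\Gamma \sim_{\mathbb{R}} -K_X - \delta A$. Choosing $\Gamma$ as a sufficiently general member of a suitable linear system (after passing to a log resolution so that a Bertini-type argument is available), one ensures that $(X, \Gamma)$ remains $\epsilon/2$-lc for $\delta$ small enough. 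Setting $B' := \Gamma$, the computation $K_X + \Gamma \sim_{\mathbb{R}} -\delta A$ yields that $-(K_X + B')$ is ample, and Theorem \ref{bab} applied with threshold $\epsilon' = \epsilon/2$ concludes the argument.
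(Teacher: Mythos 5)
The paper itself offers no proof of this statement (it is merely asserted as an equivalent reformulation of Theorem \ref{bab}), so your proposal must stand on its own; and indeed your first construction is the right starting point: with $B\sim_{\mathbb{R}}A+E$ and $B'=(1-t)B+tE$ one gets $K_X+B'\sim_{\mathbb{R}}-tA$, and $K_X+B'$ is $\mathbb{R}$-Cartier because $B-E\sim_{\mathbb{R}}A$ is. The genuine gap is in how you control the singularities. Your proposed fix --- discarding $B$ and taking $B':=\Gamma$ a ``sufficiently general'' effective divisor with $\Gamma\sim_{\mathbb{R}}-K_X-\delta A$, justified by a Bertini-type argument on a log resolution --- does not work: $-K_X-\delta A\sim_{\mathbb{R}}B-\delta A$ is merely big, so its $\mathbb{R}$-linear system can have a nonempty stable base locus along which every member, however general, is singular with uncontrolled multiplicities, and Bertini only gives information away from the base locus. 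There is no reason such a $\Gamma$ makes $(X,\Gamma)$ lc at all, let alone $\epsilon/2$-lc uniformly; if general members of big systems were automatically this mild, the hypothesis that the given $B$ has $\epsilon$-lc singularities would be superfluous.

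What you are missing is that Theorem \ref{bab} only requires the existence of \emph{some} boundary for each $X$, so the parameter $t$ (and the decomposition $A+E$) may depend on the individual pair; only the threshold $\epsilon'$ must be uniform, and that uniformity comes from convexity of log discrepancies, not from choosing the divisor general. Concretely: since $(X,B)$ is klt and $E-B$ is $\mathbb{R}$-Cartier, computing on one log resolution of $(X,\mathrm{Supp}(B+E))$ shows that $(X,B+s(E-B))$ is lc for some $s>0$ depending on the pair. Then for $0\le t\le s/2$ and every divisor $F$ over $X$ one has $a(F;X,B+t(E-B))=(1-t/s)\,a(F;X,B)+(t/s)\,a(F;X,B+s(E-B))\ge (1-t/s)\,\epsilon\ge \epsilon/2$, so $(X,B')$ is $\epsilon/2$-lc while $-(K_X+B')\sim_{\mathbb{R}}tA$ is ample; Theorem \ref{bab} applied with the uniform threshold $\epsilon/2$ concludes. (Two minor points: the ``reverse implication'' you sketch is not needed for the statement at hand, and since $K_X$ and $B$ separately need not be $\mathbb{R}$-Cartier, one should phrase the perturbation via the $\mathbb{R}$-Cartier divisor $E-B$ rather than argue with $-K_X$ itself.)
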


 In Theorem \ref{bab}, it is necessary to take $\epsilon>0$. In fact, klt Fano threefolds do not even form a birational family (see {\cite {Lin03}}). Nevertheless, Jiang shows in {\cite {Jia17}} that if we bound the alpha-invariants and the volumes from the below, we have 

\begin{thm}$\mathrm{(}${\cite[Theorem1.6]{Jia17}}\label{jiangbdd}$\mathrm{)}$
Fix a positive integer $d$ and a positive real number $\theta$. The normal projective  klt Fano (i.e. $\bQ$-Fano in \cite {Jia17}) varieties $X$ satisfying
\begin{enumerate}
\item $\dim X=d$,
\item $\alpha (X)^d(-K_X)^d>\theta$,
\end{enumerate}
form a bounded family.
\end{thm}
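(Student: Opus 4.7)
The plan is to show that under the hypotheses, $X$ itself is $\epsilon$-lc for some $\epsilon=\epsilon(d,\theta)>0$, and then deduce boundedness directly from Theorem~\ref{bab}. The key is to convert the alpha-invariant lower bound into a uniform lower bound on all log discrepancies, using the volume lower bound to produce anticanonical divisors of prescribed singularity type at every point.

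First I would extract separate lower bounds $\alpha(X)\ge \alpha_0$ and $(-K_X)^d\ge V_0$ from the product hypothesis $\alpha(X)^d(-K_X)^d>\theta$, using a universal upper bound of the form $\alpha(X)^d(-K_X)^d\le C(d)$ (Tian--Fujita-type inequality). Given one such upper bound, $\alpha(X)$ is pinned from below by $\theta/C(d)$; a matching upper bound on $\alpha(X)$ (which again exists universally in terms of $d$ and $V$ by a Fujita-type argument) then forces $(-K_X)^d$ to be bounded from below as well.

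Next, for a fixed point $x\in X$, I would use Riemann--Roch and Kawamata--Viehweg vanishing on the klt Fano $X$ to estimate $h^0(X,-mK_X)\sim \tfrac{m^d(-K_X)^d}{d!}$. Since $(-K_X)^d\ge V_0$, one can choose $m=m(d,V_0)$ so that for \emph{every} $x\in X$ there is a divisor $D_x\in |-mK_X|$ with $\mult_xD_x\ge c$ for some $c=c(d,V_0,m)>0$ (imposing vanishing to order $c$ at $x$ costs at most $\binom{c+d-1}{d}$ conditions). By the definition of the alpha-invariant, the pair $\bigl(X,\tfrac{\alpha_0}{m}D_x\bigr)$ is lc. For any valuation $E$ over $X$ with center containing $x$ one has $\mult_ED_x\ge \mult_xD_x\ge c$, so
\[
0\le a_E\!\left(X,\tfrac{\alpha_0}{m}D_x\right)=a_E(X)-\tfrac{\alpha_0}{m}\mult_ED_x,
\]
which gives $a_E(X)\ge \tfrac{\alpha_0 c}{m}=:\epsilon$. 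Letting $x$ range over $X$, every divisor over $X$ has log discrepancy at least $\epsilon$, so $X$ is $\epsilon$-lc (with $B=0$). Theorem~\ref{bab} then yields boundedness.

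The main obstacle is the multiplicity-controlled construction of $D_x$ that is uniform in $x$: the naive Riemann--Roch count produces a divisor with prescribed vanishing at a \emph{fixed} $x$, while we need $c$ to be independent of $x$. This is standard for smooth varieties but on klt Fanos one must work with the global sections carefully, perhaps by applying Kawamata--Viehweg to twisted ideal sheaves $\III_x^{\lceil c\rceil}\otimes\OO_X(-mK_X)$ after passing to a log resolution, and checking that the required $m$ and $c$ depend only on $d$ and $V_0$. A secondary subtlety is the clean extraction of $\alpha_0$ and $V_0$ from the product bound, which must be handled without circularity relative to the paper's own upper bound $\alpha^{d-1}\Vol(-K_X)\le C(d)$.
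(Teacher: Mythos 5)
Your central step does not work. From lc-ness of $\bigl(X,\tfrac{\alpha_0}{m}D_x\bigr)$ you correctly get $a_E(X)\ge\tfrac{\alpha_0}{m}\,\mathrm{ord}_E(D_x)$, but the inequality $\mathrm{ord}_E(D_x)\ge\mult_x D_x$ is only valid when the center of $E$ is the closed point $x$ itself. If the center $Z$ of $E$ merely contains $x$ and has positive dimension, a divisor with large multiplicity at $x$ need not contain $Z$ at all, so $\mathrm{ord}_E(D_x)$ can be $0$ (take $E$ the exceptional divisor of the blowup of a curve $Z\ni x$ and $D_x$ very singular at $x$ but not containing $Z$). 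Thus your argument at best bounds log discrepancies of valuations centered at closed points, and that does not imply $\epsilon$-lc: the smallest log discrepancies occur at generic points of positive-dimensional singular strata, where every valuation centered at a closed point has log discrepancy larger by roughly the dimension of the stratum (e.g. $S\times C$ with $S$ a surface with a $\frac{1}{n}(1,1)$ point is only $\frac{2}{n}$-lc although all divisors centered at closed points have log discrepancy at least $1$). In fact the implication ``$\alpha$ and volume bounded below $\Rightarrow$ uniformly $\epsilon$-lc'' is only known a posteriori, as a consequence of boundedness itself, so no local multiplicity count of this kind can be expected to produce it; and note also that forcing $\mathrm{ord}_E$ to be positive along a positive-dimensional center by a dimension count fails because the degree of the center with respect to $-K_X$ is not bounded a priori.

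The reduction to separate bounds is also a non sequitur as written: from $\theta<\alpha(X)^d(-K_X)^d\le C(d)$ (Lemma \ref{kollar}) you get no lower bound on $\alpha(X)$; you need an upper bound with strictly smaller exponent, namely $\alpha(X)^{d-1}(-K_X)^d\le M(d)$ (Theorem \ref{thm1}), which gives $\alpha(X)>\theta/M(d)$, and then a lower bound on $(-K_X)^d$ after splitting off the exceptional case $\alpha(X)>1$, which is handled by Theorem \ref{excbdd}, since otherwise $\alpha(X)\le1$ forces $(-K_X)^d>\theta$. Finally, be aware that the paper does not prove this statement at all — it quotes \cite{Jia17} — and the established route (Jiang's proof, and the paper's analogous Theorems \ref{thm4.2} and \ref{main}) does not go through $\epsilon$-lc-ness and Theorem \ref{bab}: it produces a birational pluri-anticanonical map of bounded degree, builds from it a klt Calabi--Yau boundary with coefficients in a fixed finite set, and then invokes boundedness of klt log Calabi--Yau pairs of Fano type (Theorem \ref{hx}). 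If you want a correct proof along lines available in this paper, that is the strategy to follow; as proposed, both the key inequality and the $\epsilon$-lc conclusion fail.
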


Inspired by Theorem \ref{jiangbdd}, it is natural to ask if certain boundedness holds for varieties of Fano type or under other more general setting. Thanks to boundedness of complements by Birkar (Theorem \ref{bddcomp}), if the coefficients of the boundaries are well controlled, then the boundedness, which is one of our main theorems, holds as follows.

\begin{thm}\label{main}
Fix a positive integer $d$, positive real numbers $\theta$ and $\delta$ and a finite set $\mathscr{R}$ of rational numbers in $[0,1]$. The set of all klt Fano pairs $(X,B)$ satisfying
\begin{enumerate}
\item $\dim X=d$,
\item the coefficients of $B\in\Phi(\mathscr{R})$,
\item $\alpha(X,B)^{d-1+\delta}(-(K_X+B))^d>\theta$,
\end{enumerate}
forms a log bounded family.
\end{thm}

Letting $B=0$ in Theorem \ref{main}, we have the following corollary, which answers the question asked by Jiang in {\cite[1.7]{Jia17}}.
\begin{cor}
Fix a positive integer $d$ and two positive real numbers $\delta$ and $\theta$. Then the set of klt Fano varieties $X$ satisfying
\begin{enumerate}
\item $\dim X=d$,
\item $\alpha (X)^{d-1+\delta}(-K_X)^d>\theta$,
\end{enumerate}
forms a bounded family.
\end{cor}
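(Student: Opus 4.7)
The plan is to deduce this corollary as an immediate specialization of Theorem \ref{main} by setting $B=0$. The idea is that all three hypotheses of the main theorem are inherited from the hypotheses of the corollary, and that log boundedness of the family of pairs $\{(X,0)\}$ coincides with boundedness of the family of underlying varieties $\{X\}$.

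First, I would choose any finite set $\mathscr{R}$ of rationals in $[0,1]$ so that the hyperstandard set $\Phi(\mathscr{R})$ is defined; for concreteness, take $\mathscr{R}=\{0\}$. Since the trivial divisor $B=0$ has no prime components, the requirement that the coefficients of $B$ lie in $\Phi(\mathscr{R})$ is vacuously satisfied. A klt Fano variety $X$ of dimension $d$ yields a klt Fano pair $(X,0)$ of the same dimension, so conditions (1) and (2) of Theorem \ref{main} are immediate.

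Next, I would verify that condition (2) of the corollary matches condition (3) of Theorem \ref{main}. By the definition of the alpha-invariant one has $\alpha(X,0)=\alpha(X)$, and $-(K_X+0)=-K_X$, so the inequality $\alpha(X)^{d-1+\delta}(-K_X)^d>\theta$ is literally the hypothesis $\alpha(X,B)^{d-1+\delta}(-(K_X+B))^d>\theta$ evaluated at $B=0$.

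Applying Theorem \ref{main} with these data then yields log boundedness of the family $\{(X,0)\}$. Since the boundary carries no additional geometric information beyond $X$ itself, log boundedness of the pairs is equivalent to boundedness of the underlying varieties $X$, which is the desired conclusion. There is no genuine obstacle in this deduction; the substantive work is entirely contained in the proof of Theorem \ref{main}, and the corollary is a clean specialization obtained by the dictionary $B=0$.
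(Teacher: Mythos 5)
Your proposal is correct and coincides with the paper's own (first) justification, which deduces the corollary by specializing Theorem \ref{main} to $B=0$; the observations that the coefficient condition is vacuous for $B=0$ and that log boundedness of the pairs $(X,0)$ gives boundedness of the varieties $X$ are exactly the dictionary needed. (The paper also notes an alternative route via Theorem \ref{excbdd}, Theorem \ref{thm1} and Theorem \ref{jiangbdd}, but your argument matches the primary one.)
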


Now we consider $\alpha(X,B)^{d}(-(K_X+B))^d$ as an invariant for $d$-dimensional klt Fano pairs $(X,B)$. It is well known that this invariant has an upper bound, which can be given by the following lemma.
\begin{lem}\label{kollar}$\mathrm{(}${\cite [Theorem 6.7.1]{Kol97}}$\mathrm{)}$
Let $(X,B)$ be a klt pair of dimension $d$. Then we have
$$\mathrm{lct}((X,B),|H|_{\bQ})^dH^d\leq d^d$$
for any nef and big $\bQ$-Cartier divisor $H$ on $X$.
\end{lem}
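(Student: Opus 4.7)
The plan is to bound $\lct((X,B),|H|_{\bQ})$ from above by producing, for every $\epsilon>0$, an effective $\bQ$-divisor $D\sim_{\bQ}H$ whose multiplicity at a well-chosen point approaches $(H^d)^{1/d}$. Since both sides of the claimed inequality are homogeneous of degree $d$ in $H$, I may rescale and assume $H$ is an integral Cartier divisor.

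First I choose a closed point $x\in X$ lying in the smooth locus of $X$ and outside $\Supp B$; such points form a dense open subset because $X$ is normal and $(X,B)$ is klt. Since $H$ is nef and big, asymptotic Riemann--Roch gives
\begin{equation*}
h^0(X,mH)\geq \frac{m^d H^d}{d!}(1-\epsilon)
\end{equation*}
for all $m$ sufficiently large. On the other hand, the condition that a section of $\OO_X(mH)$ vanish to order $\geq k$ at the smooth point $x$ imposes at most $\binom{k+d-1}{d}$ linear conditions. Taking $k=\lfloor m(H^d)^{1/d}(1-\epsilon)^{1/d}\rfloor$, these two estimates combine for $m\gg 0$ to yield an effective divisor $D_m\in|mH|$ with $\mult_x D_m\geq k$; setting $D:=\frac{1}{m}D_m\sim_{\bQ}H$ then gives $\mult_x D\geq k/m$.

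The last step is a log-discrepancy computation on the blowup $\pi\colon Y\to X$ of $x$: since $x$ is smooth on $X$ and $B$ does not pass through $x$, the exceptional divisor $E$ satisfies
\begin{equation*}
a_E(X,B+cD)=d-c\cdot\mult_x D
\end{equation*}
for any $c\geq 0$. Hence $(X,B+cD)$ fails to be lc at $x$ whenever $c>d/\mult_x D$, which forces $\lct((X,B),D)\leq d/\mult_x D\leq dm/k$. Passing to the infimum over the divisors $D$ constructed above and then letting $m\to\infty$ and $\epsilon\to 0$ gives $\lct((X,B),|H|_{\bQ})\leq d/(H^d)^{1/d}$, which is the desired bound.

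No single step is a genuine obstacle; the argument is classical. The two points that require some care are the asymptotic Riemann--Roch estimate, which is available because $\Vol(H)=H^d$ whenever $H$ is nef, and the simultaneous choice of $x$ in the smooth locus of $X$ and in the complement of $\Supp B$, which is what makes the blowup log-discrepancy formula take the simple form used above.
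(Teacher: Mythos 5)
The paper gives no proof of this lemma at all---it simply cites \cite[Theorem 6.7.1]{Kol97}---and your argument is exactly the classical dimension-count proof behind that citation (force high multiplicity of some $D\sim_{\bQ}H$ at a smooth point off $\Supp B$, then blow up that point to violate log canonicity and let $m\to\infty$, $\epsilon\to 0$), so it is correct. The only bookkeeping to tighten is the strict inequality $\binom{k+d-1}{d}<h^0(X,mH)$: with your choice of $k$ both sides carry the same factor $(1-\epsilon)$, so either shrink $k$ slightly or use $h^0(X,mH)\geq\frac{m^dH^d}{d!}(1-\epsilon/2)$ for $m\gg 0$ (valid since $\mathrm{vol}(H)=H^d$ for $H$ nef and big); the final limits are unaffected.
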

We will show that for $d$-dimensional klt Fano pairs $(X,B)$,
\[\alpha(X,B)^{d-1}(-(K_X+B))^d\]
is also bounded above. In fact, we have the following theorem under a more general setting.
\begin{thm}\label{thm1}
Fix a positive integer $d$. There exists a number $M$, depending only on $d$, such that for any projective normal pair $(X,B)$ and for any big and nef $\bQ$-Cartier divisor $H$ on $X$ satisfying
\begin{enumerate}
\item $\dim X=d$,
\item $(X, B)$ is klt,
\end{enumerate}
we have 
\[\mathrm{lct}((X,B),|H|_{\bQ})^{d-1}\overline{\tau}((X,B),H)H^d\leq M,\]
where $\overline{\tau}$ denotes the anti-pseudo-effective threshold (see Definition \ref{aet}).
\end{thm}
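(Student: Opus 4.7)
The plan is to set $\lambda := \mathrm{lct}((X,B),|H|_\bQ)$ and $\tau := \overline{\tau}((X,B),H)$, assume $\tau > 0$ (otherwise the inequality is trivial), and argue by induction on $d$ after splitting into two cases based on the comparison of $\tau$ and $\lambda$.

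When $\tau \le \lambda$, the bound is immediate from Lemma~\ref{kollar}:
\[
\lambda^{d-1}\tau H^d \le \lambda^d H^d \le d^d.
\]
When $\tau > \lambda$, I would pick an effective $\bQ$-divisor $D \sim_\bQ H$ with $(X, B+\lambda D)$ log canonical but not klt, and use the pseudo-effectivity of $-(K_X+B) - \tau H$ to write
\[
-(K_X + B + \lambda D) \sim_\bQ (\tau-\lambda)H + F, \qquad F \text{ pseudo-effective},
\]
which is big (sum of a big-and-nef and a psef class). This places us in the setting of an lc, non-klt pair with big anti-log-canonical class. I would then apply the tie-breaking trick to produce a minimal non-klt center $W$ of $(X, B+\lambda D)$. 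Kawamata's subadjunction furnishes a klt pair $(W, B_W)$ together with a nef $\bQ$-divisor $M_W$ on (the normalization of) $W$ satisfying $K_W+B_W+M_W \sim_\bQ (K_X+B+\lambda D)|_W$. Restricting and absorbing the nef moduli part $M_W$ gives that $-(K_W+B_W) - (\tau-\lambda)H|_W$ is psef on $W$, so
\[
\overline{\tau}((W,B_W), H|_W) \ge \tau - \lambda.
\]
Setting $k := \dim W < d$ and invoking the induction hypothesis on $(W, B_W, H|_W)$ yields $\mathrm{lct}_W^{k-1}\,\overline{\tau}_W\,(H|_W)^k \le M(k)$. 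I would then combine this lower-dimensional bound with Lemma~\ref{kollar} applied on $X$ and the intersection-theoretic identity $(H|_W)^k = [W]\cdot H^k$ to deduce the desired bound on $\lambda^{d-1}\tau H^d$.

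The hard part will be closing that last combinatorial step of the inductive case: translating the bound on $W$ back to one on $X$ requires control of the cycle class $[W]$ in terms of $H^d$ and $\lambda$, which is delicate when $W$ has codimension $\ge 2$ since $(H|_W)^k$ need not be numerically comparable to $H^d$. A natural remedy is to tie-break so that the minimal non-klt center is either an isolated point at a general position of $X$ or a divisor with well-controlled $[W]\cdot H^{d-1}$; the isolated-point case reduces to a refined local argument paralleling Kollar's proof of Lemma~\ref{kollar}, now using divisors with prescribed multiplicity at a general point constructed jointly from $|nH|$ and $|n(-(K_X+B))|$ via asymptotic Riemann--Roch. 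The nef moduli part $M_W$ and the restriction $F|_W$ must also be handled with care, typically via general-position choices during tie-breaking so that restrictions of psef classes remain psef; these bookkeeping issues are the main source of technical difficulty.
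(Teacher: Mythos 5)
Your split into the cases $\tau\le\lambda$ and $\tau>\lambda$ is fine (the first is exactly Lemma \ref{kollar}, and one may even assume $\tau>2\lambda$ at no cost), but the inductive step in the second case does not close, and the missing piece is precisely the point you flag at the end: for a \emph{single} minimal non-klt center $W$ of $(X,B+\lambda D)$ there is no lower bound on $H^{\dim W}\cdot W$ in terms of $\lambda$ and $H^d$. The center produced by one divisor $D$ computing (nearly) the lct can be, say, a curve of arbitrarily small $H$-degree even when $\lambda^{d-1}\tau H^d$ is enormous, so the induction hypothesis applied to $(W,B_W,H|_W)$ yields no information about the quantity you need to bound. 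Moreover, two steps you treat as bookkeeping are themselves unjustified: the restriction of the pseudo-effective class $-(K_X+B)-\tau H$ to $W$ need not be pseudo-effective if $W$ sits in its diminished base locus (and $W$ is dictated by $D$, not by a general-position choice), and Kawamata subadjunction gives no lower bound for the log canonical threshold of $|H|_W|_{\bQ}$ with respect to $(W,B_W)$ in terms of $\lambda$, which your final combination would also require. So the proposal is not a proof; the ``hard part'' you defer is the actual content of the theorem.

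For comparison, the paper avoids induction and subadjunction entirely. One rescales $H$ so that $\overline{\tau}((X,B),H)=2$, writes $-(K_X+B)-H=F+E$ with $F$ ample and $E\ge 0$, and assumes the bound fails, so that (after using Lemma \ref{kollar} to reduce to $\mathrm{lct}\le 1$) $H^d$ is very large. Proposition \ref{birkar}, applied with $D=\frac{1}{2}H$ and $A=\frac{1}{2(d-1)}H$, produces for every \emph{general} pair of points $x,y$ a boundary $\Delta\sim_{\bQ}H$ whose non-klt locus isolates a center $G_{xy}$ through $x$ and is non-klt at $y$, with $G_{xy}$ varying in a bounded family; because these centers pass through general points they move in a family dominating $X$, and Lemma \ref{jiang} then gives $H^k\cdot G_{xy}\ge \mathrm{lct}^{d-1}H^d/(\binom{d}{k}(d-k)^{d-k})$, which is so large that it contradicts $A^{k}\cdot G_{xy}\le d^d$ unless $\dim G_{xy}=0$. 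Finally the Koll\'ar--Shokurov connectedness lemma (Lemma \ref{cnnlem}) applied to $(X,B+\Delta+E)$, whose anti-log-canonical class is the ample divisor $F$, gives a contradiction, since its non-klt locus contains the isolated point $x$ as well as $y$. If you wish to salvage your strategy, the ingredient you must add is exactly this ``non-klt centers through general points'' mechanism (Proposition \ref{birkar} plus Lemma \ref{jiang}); the single-center subadjunction route cannot control $H^{\dim W}\cdot W$ against $H^d$.
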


\begin{remark}
In contrast with the above, for $d$-dimensional klt Fano pairs $(X,B)$, $\alpha(X,B)^{d'}(-(K_X+B))^d$ are not bounded above if $d'<d-1$. To see this, consider the weighted projective spaces $X_n=\bP(1^d,n)$, which are klt Fano varieties of dimension $d$ with $(-K_{X_n})^d=\frac{(n+d)^d}{n}$ and $\alpha(K_{X_n})=\frac{1}{n+d}$ (cf. {\cite[6.3]{Amb16}}). Then we have, $\alpha(X)^{d-1-\delta}(-K_{X_n})^d=\frac{(n+d)^{(1+\delta)}}{n}$, which are not bounded above for any positive real number $\delta$.
\end{remark}
\begin{remark}
We remark that for a klt Fano variety $X$ of dimension $d$, a lower bound of $\alpha(X)$ provides an upper bound of $(-K_X)^d$ by Lemma \ref{kollar}. However, the set of klt Fano varieties with $(-K_X)^d$ both side bounded does not form a bounded family. As an example, consider the family of weighted projective spaces $\{X_{p,q,r}=\bP(p,q,r)\}$ with $(p,q,r)$ pairwisely coprime. Then $\{(-K_{X_{p,q,r}})^2\}=\{\frac{(p+q+r)^2}{pqr}\}$ is a dense subset of $\bR_{>0}$. Therefore, for any two positive integers $a<b$, $\{X_{p,q,r}|\frac{(p+q+r)^2}{pqr}\in(a,b)\}$ is a family of klt Fano varieties which is not bounded.
\end{remark}

\medskip

\noindent\textbf{Acknowledgments}. This paper is inspired by {\cite {Jia17}}. The author would like to thank Chen Jiang and his advisor Yoshinori Gongyo for many inspiring and useful discussions and suggestions. The author was supported by Japan--Taiwan Exchange Association Scholarship.
\section{Preliminaries}
We adopt the standard notation and definitions in \cite{KMM} 
and \cite{KM}, and will freely use them.

\subsection{Pairs and singularities}
A {\it sub-pair} $(X, B)$ consists of a normal projective variety $X$ 
and an $\bR$-divisor $B$ on $X$ such that $K_X+B$ is $\bR$-Cartier. $B$ is called the {\it sub-boundary} of this pair.

A {\it log pair} $(X, B)$ is a sub-pair with $B\geq 0$. We call $B$ a {\it boundary} in this case.

Let $f\colon Y\rightarrow X$ be a log
resolution of the log pair $(X, B)$, write
\[
K_Y =f^*(K_X+B)+\sum a_iF_i,
\]
where $\{F_i\}$ are distinct prime divisors.  
For a non-negative real number $\epsilon$, the log pair $(X,B)$ is called
\begin{itemize}
\item[(a)] \emph{$\epsilon$-kawamata log terminal} (\emph{$\epsilon$-klt},
for short) if $a_i> -1+\epsilon$ for all $i$;
\item[(b)] \emph{$\epsilon$-log canonical} (\emph{$\epsilon$-lc}, for
short) if $a_i\geq  -1+\epsilon$ for all $i$;
\end{itemize}

Usually we write $X$ instead of $(X,0)$ in the case when $B=0$.
Note that $0$-klt (resp. $0$-lc) is just klt (resp. lc) in the usual sense. Also note that 
$\epsilon$-lc singularities only make sense if $\epsilon\in [0,1]$, and  $\epsilon$-klt 
singularities only make sense if $\epsilon\in [0,1)$.

Similarly, sub-$\epsilon$-klt and sub-$\epsilon$-lc sub-pairs can be defined.

The {\it log discrepancy} of the divisor $F_i$ is defined to be $a(F_i, X, B)=1+a_i$.
It does not depend on the choice of the log resolution $f$.

$F_i$ is called a {\it non-lc place} of $(X, B)$  if $a_i< -1$.
A subvariety $V\subset X$ is called a {\it non-lc center} of 
$(X, B)$ if it is the image of a non-lc place. 
The {\it non-klt locus} $\text{Nklt}(X, B)$ is the union of 
all non-lc centers of $(X, B)$.
We recall the Koll\'{a}r-Shokurov connectedness lemma.
\begin{lem}\label{cnnlem}$\mathrm($cf. \cite{Sho93}, \cite{Sho94} and {\cite[17.4]{Kol92}}$\mathrm)$
Let $(X,B)$ be a log pair, and let $\pi$: $X\rightarrow S$ be a proper morphism with connected fibers. Suppose $-(K_X+B)$ is $\pi$-nef and $\pi$-big. Then $\mathrm{Nklt}(X, B)\cap X_s$ is connected for any fiber $X_s$ of $\pi$.
\end{lem}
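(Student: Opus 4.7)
The plan is to prove connectedness by establishing a vanishing $R^i(\pi\circ f)_*\OO_Y(A-N)=0$ on a log resolution via relative Kawamata--Viehweg, and then reading off the conclusion from a short exact sequence. First I would take a log resolution $f\colon Y\to X$ of $(X,B)$ and write $K_Y+B_Y=f^*(K_X+B)$ with $B_Y=\sum b_iF_i$ (so $b_i=-a_i$ in the paper's notation). Splitting the prime components according to whether $b_i\geq 1$ (the non-klt places, whose images constitute $\Nklt(X,B)$) or $b_i<1$, I would set
\[ A := \lceil -B_Y^{<1} \rceil, \qquad N := \lfloor B_Y^{\geq 1} \rfloor. \]
These are effective integral divisors with disjoint supports, and since $B\geq 0$ every non-exceptional prime component of $B_Y$ has coefficient in $[0,1]$, so $A$ is $f$-exceptional.

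Next I would verify the key identity: a componentwise check shows that $D:=A-N+B_Y$ is effective with SNC support and all coefficients in $[0,1)$, so $(Y,D)$ is sub-klt. Combined with $K_Y+B_Y=f^*(K_X+B)$, this yields
\[ A-N-K_Y \sim_{\bR} D + f^*(-(K_X+B)). \]
Since $-(K_X+B)$ is $\pi$-nef and $\pi$-big, its pullback is $(\pi\circ f)$-nef and $(\pi\circ f)$-big, and the relative Kawamata--Viehweg vanishing theorem delivers $R^i(\pi\circ f)_*\OO_Y(A-N)=0$ for every $i>0$.

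Finally I would push the short exact sequence $0\to\OO_Y(A-N)\to\OO_Y(A)\to\OO_N(A)\to 0$ forward along $\pi\circ f$; the vanishing converts this into a surjection $(\pi\circ f)_*\OO_Y(A)\twoheadrightarrow(\pi\circ f)_*\OO_N(A)$. Because $A$ is effective and $f$-exceptional, $(\pi\circ f)_*\OO_Y(A)=\pi_*\OO_X$, and after Stein factorization this is $\OO_S$ thanks to the hypothesis that $\pi$ has connected fibers. Restricting to a fiber over $s\in S$, and using that $A|_N$ is effective so that $\OO_{N_s}\hookrightarrow\OO_N(A)|_{N_s}$, I would conclude that $N\cap(\pi\circ f)^{-1}(s)$ has only one connected component; its image $\Nklt(X,B)\cap X_s$ under the continuous map $f$ is then also connected.

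The main obstacle is exactly this last fiberwise step: promoting a relative vanishing statement (about higher direct images) to a statement about one possibly singular fiber $X_s$. This needs cohomology-and-base-change, or the alternative trick of replacing $S$ by its localization/Henselization/completion at $s$ so as to reduce to the global case $S=\mathrm{Spec}\,k$. A secondary, purely bookkeeping task is the componentwise verification that $(Y,D)$ is sub-klt, which separates into three cases ($b_i\in[0,1)$, $b_i<0$, and $b_i\geq 1$) producing coefficients $\{b_i\}$ or $\{-b_i\}$ in $[0,1)$ respectively.
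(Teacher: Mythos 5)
The paper itself offers no proof of this lemma -- it is quoted from the cited sources -- and your outline is exactly the standard argument of {\cite[17.4]{Kol92}}: pass to a log resolution, split the crepant boundary into the exceptional ``positive'' part $A=\lceil -B_Y^{<1}\rceil$ and the non-klt part $N=\lfloor B_Y^{\geq 1}\rfloor$, check that $D=A-N+B_Y$ is an SNC boundary with coefficients in $[0,1)$, apply relative Kawamata--Viehweg to get $R^i(\pi\circ f)_*\OO_Y(A-N)=0$, and read off the surjection $(\pi\circ f)_*\OO_Y(A)\twoheadrightarrow(\pi\circ f)_*\OO_N(A)$ with $(\pi\circ f)_*\OO_Y(A)=\pi_*\OO_X$. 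Those steps are all correct (note $A$ is exceptional because negative coefficients of $B_Y$ occur only on exceptional divisors, $B\geq 0$ being the point, not ``coefficients in $[0,1]$''; also $N$ may be non-reduced, which is harmless, and one may simply replace $S$ by its Stein factorization, which has the same fibers, to arrange $\pi_*\OO_X=\OO_S$).

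The one place your sketch would need repair is precisely the fiberwise step you flag. The injection $\OO_{N_s}\hookrightarrow\OO_N(A)|_{N_s}$ you propose is multiplication by the canonical section of $\OO_Y(A)$, and this can degenerate on a fiber: although $A$ and $N$ have no common components as divisors on $Y$, a connected component of $N\cap(\pi\circ f)^{-1}(s)$ may be entirely contained in $\Supp A$, so you cannot conclude directly that every connected component of the fiber contributes a nonzero section. The robust version is the other route you mention: base change to $T=\mathrm{Spec}\,\widehat{\OO}_{S,s}$ (flat, so the vanishing and the surjection persist). Over the complete local base, connected components of $N_T$ biject with those of the closed fiber $N_s$; each connected component of $N_T$ contains an irreducible component of $N_T$, which is a horizontal divisorial piece not contained in $A_T$, so the section of $A$ restricts to a nonzero element and each summand of $H^0(N_T,\OO(A))=\bigoplus_i H^0(N_i,\OO(A))$ is nonzero; finally a cyclic module over the local ring $\OO(T)$ cannot surject onto a direct sum of two nonzero modules, contradicting $\OO(T)=H^0(Y_T,\OO(A))\twoheadrightarrow H^0(N_T,\OO(A))$ if $N_s$ were disconnected. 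Taking the image under $f$ then gives connectedness of the non-klt locus in $X_s$, as you say. With that paragraph supplied, your proof is complete and agrees with the proof in the cited references.
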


\subsection{Fano pairs and Calabi--Yau pairs}
A normal projective pair $(X,B)$ is a {\it Fano} (resp. {\it weak Fano}, resp. {\it Calabi--Yau}) pair if $-(K_X+B)$ is ample (resp. $-(K_X+B)$ is nef and big, resp. $K_X+B\equiv 0$).
A normal projective variety $X$ is called Fano if $(X,0)$ is Fano. It is called {\it $\mathbb{Q}$-Fano} if it is klt and Fano. It is called {\it of Fano type} if $(X,B)$ is klt weak Fano for some boundary $B$.

\subsection{Bounded pairs}\label{sec.bdd}
A collection of varieties $ \mathcal{D}$ is
said to be \emph{bounded} (resp. 
\emph{birationally bounded})
if there exists 
$h\colon \mathcal{Z}\rightarrow S$ a projective morphism 
of schemes of finite type such that
each $X\in \mathcal{D}$ is isomorphic (resp. birational, 
or isomorphic in codimension one) to $\mathcal{Z}_s$ 
for some closed point $s\in S$.

A couple $(X,D)$ consists of a normal projective variety $X$ and a reduced divisor $D$ on X. Note that we do not require $K_X+D$ to be $\bQ$-Cartier here.

We say that a collection of couples $\mathcal{D}$ is 
{\it log birationally bounded} (resp.  \emph{log bounded})
if there is a  quasi-projective scheme $\mathcal{Z}$, a 
reduced divisor $\mathcal{E}$ on $\mathcal Z$, and a 
projective morphism $h\colon \mathcal{Z}\to S$, where 
$S$ is of finite type and $\mathcal{E}$ does not contain 
any fiber, such that for every $(X,D)\in \mathcal{D}$, 
there is a closed point $s \in S$ and a birational
map $f \colon \mathcal{Z}_s \dashrightarrow X$ 
(resp. isomorphic)
such that $\mathcal{E}_s$ contains the support of $f_*^{-1}B$ 
and any $f$-exceptional divisor.

A set of log pairs $\mathcal{P}$ is 
{\it log birationally bounded} (resp.  \emph{log bounded})
if the set of the corresponding couples $\{(X,\mathrm{Supp}B)|(X,B)\in\mathcal{P}\}$ is.

\subsection{Volumes}
Let $X$ be a $d$-dimensional projective variety  and $D$ 
a Cartier divisor on $X$. The {\it volume} of $D$ is the real number
\[
\mathrm{vol}(X, D)=\limsup_{m\rightarrow \infty}\frac{h^0(X,\OO_X(mD))}{m^d/d!}.
\]
For more backgrounds on the volume, see \cite[2.2.C]{Positivity1}. 
By the homogenous property and  continuity of the volume, we 
can extend the definition to $\bR$-Cartier $\bR$-divisors. 
Moreover, if $D$ is a nef $\bR$-divisor, then vol$(X, D)=D^d$.

\subsection{Complements}
\begin{definition}
Let $(X,B)$ be a pair and $n$ a positive integer. We write $B=\floor{B}+\{B\}$. An {\it n-complement} of $K_X+B$ is a divisor of the form $K_X+B^+$ such that 
\begin{enumerate}
\item $(X,B^+)$ is lc,
\item $n(X,B^+)\sim 0$,
\item $nB^+\leq n\floor{B}+\floor{(n+1)\{B\}}$.
\end{enumerate}

\end{definition}
\begin{definition}
For a subset $\mathscr{R}$ of $[0,1]$, we define the set of {\it hyperstandard multiplicities} associated to $\mathscr{R}$ to be
\[\Phi(\mathscr{R})=\{ 1-\frac{r}{m}| r\in\mathscr{R},\ m\in\mathbb{N} \}.\]
\end{definition}
Note that the only possible accumulating point of $\Phi(\mathscr{R})$  is $1$ if $\mathscr{R}$ is finite.
Birkar shows the following boundedness of complements.
\begin{thm}$\mathrm(${\cite [Theorem 1.7]{Bir16a}}$\mathrm)$\label{bddcomp}
Fix a positive integer $d$ and a finite set $\mathscr{R}$ of rational numbers in $[0,1]$. Then there exists a positive integer $n$ depending only on $d$ and $\mathscr{R}$, such that if  $(X,B)$ is a projective pair with
\begin{enumerate}
\item $(X,B)$ is lc dimension $d$,
\item the coefficients of $B\in\Phi(\mathscr{R})$,
\item $X$ is of Fano type,
\item $-(K_X+B)$ is nef,
\end{enumerate}
then there is an $n$-complement $K_X+B^+$ of $K_X+B$ such  that $B^+\geq B$.
\end{thm}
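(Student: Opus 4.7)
The plan is to induct on the dimension $d$, following the complements program of Shokurov in low dimensions and extended by Birkar in higher dimensions. The starting point is that, since $X$ is of Fano type and $-(K_X+B)$ is nef, abundant positivity is available to exploit. By replacing $X$ with a $\bQ$-factorial dlt modification and running a suitable minimal model program, one reduces to a model on which birational surgery is possible while keeping coefficients inside $\Phi(\mathscr{R})$. A crucial dichotomy then splits the argument into two regimes: either some effective $\bQ$-divisor $\bQ$-linearly equivalent to $-(K_X+B)$ produces a non-klt center of positive dimension, or $(X,B)$ is \emph{exceptional} in the sense that every $\bQ$-complement is automatically klt.

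In the non-exceptional case, a tie-breaking argument combined with the Koll\'ar--Shokurov connectedness lemma (Lemma \ref{cnnlem}) produces an effective $\Delta \sim_{\bQ} -(K_X+B)$ such that $(X, B+\Delta)$ has a unique minimal non-klt center $V$. Adjunction to (a dlt model of) $V$ yields a lower-dimensional pair $(V, B_V)$ whose coefficients lie in a hyperstandard set $\Phi(\mathscr{R}')$ depending only on $\mathscr{R}$ and $d$, thanks to Shokurov-type adjunction formulae that preserve hyperstandard structure up to a controlled enlargement of $\mathscr{R}$. The induction hypothesis provides an $n'$-complement $K_V+B_V^+$ with $n'$ bounded, and the key step is to lift this to an $n$-complement on $X$. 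One carries out the lifting via Kawamata--Viehweg vanishing applied to an auxiliary line bundle roughly of the form $-n'(K_X+B)$ twisted to cut out $V$: the vanishing provides an extension of sections from $V$ to $X$, and one verifies the resulting divisor satisfies the integrality condition $nB^+ \le n\floor{B} + \floor{(n+1)\{B\}}$.

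The exceptional case is handled separately and is the deepest piece of the argument: one combines boundedness of $\epsilon$-lc Fano varieties (Theorem \ref{bab}) with effective birationality of anticanonical systems to show that exceptional Fano pairs form a bounded family, from which a uniform $n$ is extracted via boundedness of Cartier indices on the family. I expect the main obstacle to be the lifting step in the non-exceptional case: the auxiliary divisor must be chosen so that the multiplier ideal on $X$ restricts cleanly to $V$ while simultaneously respecting the hyperstandard integrality that defines an $n$-complement, and controlling this interaction requires delicate estimates on log discrepancies and on how the discrepancies of $(V,B_V)$ match those of $(X,B)$. Finally, the whole induction is necessarily intertwined with boundedness of lc thresholds and boundedness of exceptional pairs, so these statements must be proved in tandem rather than sequentially.
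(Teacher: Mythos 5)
The first thing to note is that the paper does not prove this statement at all: Theorem \ref{bddcomp} is quoted verbatim from Birkar's work ({\cite[Theorem 1.7]{Bir16a}}) and is used as a black box in the proof of Theorem \ref{thm4.2}. So there is no proof in the paper to compare yours against; what you have written is, in effect, a compressed survey of Birkar's own argument. As such it is broadly faithful to the actual strategy (induction on dimension, passage to $\bQ$-factorial dlt models and MMP, the dichotomy between the exceptional and non-exceptional cases, adjunction to a non-klt center with hyperstandard coefficients controlled by an enlarged set $\mathscr{R}'$, and lifting of lower-dimensional complements via Kawamata--Viehweg vanishing), but it is not a proof: every genuinely hard step --- the construction and control of the non-klt center, the lifting with the integrality condition $nB^+\le n\floor{B}+\floor{(n+1)\{B\}}$, effective birationality of $|-m(K_X+B)|$, and boundedness of exceptional pairs --- is explicitly deferred or flagged as ``the main obstacle.'' These are precisely the points that occupy the bulk of Birkar's paper, so the proposal cannot be accepted as a proof of the statement.

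There is also a concrete logical problem in your treatment of the exceptional case: you propose to use boundedness of $\epsilon$-lc Fano varieties (Theorem \ref{bab}) to bound exceptional pairs and extract a uniform $n$. But Theorem \ref{bab} is {\cite[Theorem 1.1]{Bir16b}}, whose proof relies on boundedness of complements from {\cite{Bir16a}}; invoking it here is circular with respect to the actual architecture of the literature. In Birkar's argument the exceptional case is handled inside the joint induction of {\cite{Bir16a}} itself: one proves boundedness of exceptional pairs (the statement quoted in this paper as Theorem \ref{excbdd}) using effective birationality, DCC/ACC results of Hacon--M\textsuperscript{c}Kernan--Xu, and boundedness of log Calabi--Yau pairs of Fano type (Theorem \ref{hx}), not BAB in the same dimension. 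If you want a self-contained account, you would need to either reproduce that induction or restructure the exceptional case so that only lower-dimensional or independently established boundedness inputs are used.
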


\subsection{$\alpha$-invariants, log canonical thresholds and anti-pseudo-effective thresholds}
\begin{definition}
Let $(X,B)$ be a projective lc pair and let $D$ be an effective ${\bR}$-Cartier divisor, we define the {\it log canonical threshold} of $D$ with respect of $(X,B)$ to be
\[\lct((X,B),D)=\sup\{\text{$t\in \mathbb{R}$ $|$ $(X,B+tD)$ is lc}\}.\]
The log canonical threshold of $|D|$ with respect of $(X,B)$ is defined to be
\[\lct((X,B),|D|_{\bQ})=\inf\{ \mathrm{lct}((X,B),M)| M\in|D|_{\bQ}\},\]
which is equal to
\[\lct((X,B),|D|_{\bR})=\inf\{ \mathrm{lct}((X,B),M)| M\in|D|_{\bR}\}.\]
\end{definition}
\begin{definition}
Let $(X,B)$ be a projective normal klt weak Fano pair, we define the {\it $\alpha$-invariant} of $(X,B)$ to be
\[\alpha(X,B)=\mathrm{lct}((X,B),|-(K_X+B)|_{\bQ}).\]
In the case when $B=0$, we usually write $\alpha(X):=\alpha(X,0)$ for convenience.
\end{definition}

\begin{definition}\label{aet}
Let $(X,B)$ be a projective normal pair, and $H$ a big $\mathbb{R}$-Cartier divisor. The {\it anti-pseudo-effective threshold} of $H$ respect to $(X,B)$ is defined by
 \begin{align*}
\text{$\overline{\tau}((X,B),H)$}&\text{$=$sup$\{ t\in\mathbb{R}|$ $-K_X-B-tH$ is pseudo-effective$\}$}\\
&\text{$=$sup$\{ t\in\mathbb{R}|$ $K_X+B+tH$ is anti-pseudo-effective$\}$.}
 \end{align*}
\end{definition}
\subsection{Potentially birational divisors}
\begin{definition}(cf. {\cite[Difinition 3.5.3]{HMX14}})
Let $X$ be a projective normal variety, and $D$ a big $\mathbb{Q}$-Cartier $\bQ$-divisor on $X$. Then, we say that $D$ is {\it potentially birational} if for any two general points $x$ and $y$ of $X$, there is an effective $\bQ$-divisor $\Delta\sim_{\bQ}(1-\epsilon)D$ for some $0<\epsilon<1$, such that, after possibly switching $x$ and $y$, $(X,\Delta)$ is not lc at $y$, lc at $x$ and $x$ is a non-klt center.
\end{definition}

\begin{lem}$\mathrm(${\cite[Lemma 2.3.4]{HMX13}}$\mathrm)$
Let $X$ be a projective normal variaty, and $D$ a big $\mathbb{Q}$-Cartier $\bQ$-divisor on $X$. If $D$ is potentially birational, then $|K_X+\ceil{D}|$ defines a birational map.
\end{lem}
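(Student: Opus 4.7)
The plan is to show that for two general points $x, y \in X$, there exists a section of $|K_X+\ceil{D}|$ vanishing at $y$ but not at $x$; this separation of general points then forces $|K_X+\ceil{D}|$ to define a birational map.

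First, I would invoke the definition of potential birationality: after possibly swapping $x$ and $y$, there is an effective $\bQ$-divisor $\Delta_0\sim_{\bQ}(1-\epsilon_0)D$, with $0<\epsilon_0<1$, such that $(X,\Delta_0)$ is lc at $x$ with $\{x\}$ a non-klt center, while $(X,\Delta_0)$ is not lc at $y$. Next, using the bigness of $D$ (by Kodaira's lemma write $D\sim_{\bQ}A+E_0$ with $A$ ample and $E_0$ effective), I would perform a tie-breaking perturbation to produce $\Delta\sim_{\bQ}(1-\epsilon)D$ for some $0<\epsilon<1$ close to $\epsilon_0$ satisfying:
\begin{itemize}
\item[(i)] $\{x\}$ is the unique non-klt center of $(X,\Delta)$ through $x$, and in particular $\{x\}$ is an isolated point of $\Nklt(X,\Delta)$;
\item[(ii)] $(X,\Delta)$ remains not lc at $y$;
\item[(iii)] $N:=\ceil{D}-\Delta\sim_{\bQ}\epsilon D+(\ceil{D}-D)$ is big and nef (absorbing a little of $A$ to obtain nefness).
\end{itemize}

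With this setup, writing $K_X+\ceil{D}=K_X+\Delta+N$, Nadel vanishing applied to the non-klt ideal sheaf $\mathcal{J}:=\mathcal{J}(X,\Delta)$ gives
\[H^1(X,\mathcal{J}\otimes\mathcal{O}_X(K_X+\ceil{D}))=0.\]
Setting $V=\Nklt(X,\Delta)$, the short exact sequence
\[0\to\mathcal{J}\otimes\mathcal{O}_X(K_X+\ceil{D})\to\mathcal{O}_X(K_X+\ceil{D})\to\mathcal{O}_V\otimes\mathcal{O}_X(K_X+\ceil{D})\to 0\]
therefore induces a surjection on global sections. Since $\{x\}$ is an isolated component of $V$ while $y$ lies on a different component of $V$ (as $y\neq x$ and locally near $x$ the non-klt locus is reduced to $\{x\}$), one can pick a section of $\mathcal{O}_V\otimes\mathcal{O}_X(K_X+\ceil{D})$ equal to a nonzero constant at $x$ and vanishing on the component of $V$ containing $y$; lifting it to $X$ yields a section of $|K_X+\ceil{D}|$ separating $x$ from $y$.

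The main obstacle is the tie-breaking step: simultaneously arranging $\{x\}$ as an isolated non-klt center of $(X,\Delta)$, preserving the non-lc condition at $y$, and keeping $\ceil{D}-\Delta$ big and nef so Nadel vanishing applies. The standard remedy is to mix $\Delta_0$ with small multiples of general members of $|A|$ and of effective divisors vanishing on the extraneous non-klt centers through $x$; the discreteness of log canonical thresholds together with the openness of non-lc failure at the general point $y$ ensure that all three conditions can be achieved with $\epsilon$ only slightly smaller than $\epsilon_0$.
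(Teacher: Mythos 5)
First, a point of comparison: the paper does not prove this lemma at all --- it is quoted from \cite[Lemma 2.3.4]{HMX13} --- so there is no in-paper argument to measure yours against; I can only judge your outline against the standard proof, whose overall strategy (isolate $x$ as a non-klt centre by tie-breaking, keep $(X,\Delta)$ non-lc at the general point $y$, and lift a section from the non-klt locus via Nadel/Kawamata--Viehweg vanishing to separate $x$ from $y$, hence birationality) is exactly what you propose.

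The genuine gap is your step (iii). The class of $N=\ceil{D}-\Delta$ is forced: $N\sim_{\bQ}\epsilon D+(\ceil{D}-D)$, which is big but in general \emph{not} nef, and nefness cannot be obtained by ``absorbing a little of $A$'': if $N\cdot C<0$ for some curve $C$, then $N+\delta A$ is still negative on $C$ for small $\delta$, and since nefness is a numerical property of the fixed class $\epsilon D+(\ceil{D}-D)$ no choice of representative $\Delta$ helps. As written, the hypothesis of Nadel vanishing simply fails. The standard repair --- for which you already have all the ingredients --- is the opposite move: write $\epsilon D\sim_{\bQ}\epsilon A+\epsilon E_0$ and push the effective pieces into the \emph{boundary}, setting $\Delta'=\Delta+\epsilon E_0+(\ceil{D}-D)$, so that $K_X+\ceil{D}\sim_{\bQ}K_X+\Delta'+\epsilon A$ with $\epsilon A$ ample; since $x$ and $y$ are general they avoid $\Supp(E_0)\cup\Supp(\ceil{D}-D)$, so the isolated non-klt centre at $x$ and the non-lc condition at $y$ are unchanged, and Nadel vanishing applies to $\mathcal{J}(X,\Delta')$ with the ample difference $\epsilon A$. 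A smaller point you gloss over: $X$ is only normal and nothing makes $K_X+\ceil{D}$ (or $K_X+\Delta$) Cartier, so the vanishing and the lifting of sections should be carried out on a log resolution and pushed forward (or one must justify the multiplier-ideal formalism in this generality); since $x,y$ are general this does not disturb the separation argument, but it needs to be addressed. With these two corrections the rest of your outline (surjectivity onto sections of the non-klt locus, a section equal to $1$ at the isolated point $x$ and $0$ on the component through $y$, and the standard conclusion that separating general points gives birationality) goes through as in the standard proof.
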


\subsection{Exceptional pairs}
\begin{definition}
A projective normal klt pair $(X,B)$ is called {\it exceptional} if \[\mathrm{lct}((X,B),|-(K_X+B)|_{\bQ})>1.\]
In particular, if $(X,B)$ is weak Fano, then it is exceptional if and only if $\alpha(X,B)>1$.
\end{definition}
\begin{thm}\label{excbdd}$\mathrm(${\cite[Theorem 1.11]{Bir16a}}$\mathrm)$
Fix a positive integer $d$ and a finite set of rational numbers $\mathscr{R}$ in $[0,1]$. Then the set of all projective pairs $(X,B)$ satisfying
\begin{enumerate}
\item $(X,B)$ is lc dimension $d$,
\item the coefficients of $B\in\Phi(\mathscr{R})$,
\item $X$ is of Fano type,
\item $-(K_X+B)$ is nef,
\item $(X,B)$ is exceptional,
\end{enumerate}
forms a log bounded family.
\end{thm}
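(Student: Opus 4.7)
The plan is to reduce the statement to a Calabi--Yau boundedness problem via Birkar's boundedness of complements (Theorem \ref{bddcomp}), use exceptionality to force klt-ness of the complement, and then invoke the Calabi--Yau form of BAB (Theorem \ref{bab2}). First, apply Theorem \ref{bddcomp}: since $(X,B)$ is an lc pair of dimension $d$ with coefficients in $\Phi(\mathscr{R})$, $X$ of Fano type, and $-(K_X+B)$ nef, there exist a positive integer $n = n(d,\mathscr{R})$ and an $n$-complement $K_X+B^+$ of $K_X+B$ with $B^+ \geq B$, $(X,B^+)$ lc, and $n(K_X+B^+) \sim 0$. The defining inequality $nB^+ \leq n\lfloor B\rfloor + \lfloor(n+1)\{B\}\rfloor$ forces $\Supp B^+ \subseteq \Supp B$, so in fact $\Supp B^+ = \Supp B$, and the nonzero coefficients of $B^+$ are bounded below by a positive constant $c = c(\mathscr{R})$, namely the smallest nonzero element of $\Phi(\mathscr{R})$.

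Next, upgrade from lc to klt via exceptionality. The difference $D := B^+ - B$ satisfies $nD \sim -n(K_X+B)$, hence $D \in |-(K_X+B)|_{\bQ}$. Exceptionality gives $\lct((X,B), |-(K_X+B)|_{\bQ}) > 1$, so $\lct((X,B), D) > 1$, meaning $(X, B+tD)$ is klt for every $t < \lct((X,B),D)$; taking $t = 1$ shows $(X, B^+)$ is klt. Moreover $B^+ \sim_{\bQ} -K_X$ is big, because $X$ being of Fano type ensures $-K_X$ is big. We have produced a klt Calabi--Yau pair $(X, B^+)$ with big boundary and $n(K_X+B^+) \sim 0$ for a uniform $n$.

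The main obstacle is to upgrade klt-ness to a uniform $\epsilon$-lc bound, i.e.\ to find $\epsilon = \epsilon(d,\mathscr{R}) > 0$ such that every such $(X, B^+)$ is $\epsilon$-lc. The plan here is a contradiction argument using induction on $d$: suppose the minimal log discrepancies $\mld(X_i, B_i^+)$ of a sequence of examples tend to $0$. Extract a divisor $E_i$ of small log discrepancy over $X_i$ by a plt blowup $Y_i \to X_i$, and perform divisorial adjunction to $E_i$ to produce a $(d-1)$-dimensional klt Calabi--Yau pair $(E_i, B_{E_i})$; Shokurov's different formula ensures its coefficients lie in a DCC set controlled by $n$ and $\mathscr{R}$. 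Combined with the Kollár--Shokurov connectedness lemma (Lemma \ref{cnnlem}) applied to the plt contraction, the exceptionality of $(X_i, B_i)$ transfers to a uniform condition on $(E_i, B_{E_i})$, so the inductive hypothesis bounds the family $\{E_i\}$; pulling this bound back to $X_i$ through the bounded plt contraction contradicts $\mld(X_i, B_i^+) \to 0$.

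With uniform $\epsilon$-lc-ness in hand, Theorem \ref{bab2} applied to the $\epsilon$-lc Calabi--Yau pair $(X, B^+)$ with big boundary shows that $X$ lies in a bounded family $\mathcal{X} \to S$. Such a family admits a relatively ample polarization of bounded degree, and $\deg B^+$ is bounded because $B^+ \sim_{\bQ} -K_X$. Since the nonzero coefficients of $B^+$ are bounded below by $c > 0$, the reduced divisor $\Supp B = \Supp B^+$ has bounded degree and therefore lies in a bounded relative Hilbert scheme over $S$. This shows the collection $\{(X, \Supp B)\}$ forms a log bounded family, completing the proof.
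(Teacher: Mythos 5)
First, note that the paper does not prove Theorem \ref{excbdd} at all: it is quoted verbatim from \cite{Bir16a}, so there is no internal proof to compare with; your attempt has to stand on its own. Its first half is essentially fine: Theorem \ref{bddcomp} gives an $n$-complement with $n=n(d,\mathscr{R})$, and exceptionality of $(X,B)$ does force $(X,B^+)$ to be klt (if $a(E;X,B^+)=0$ then either $E$ has positive multiplicity in the pullback of $B^+-B\sim_{\bQ}-(K_X+B)$, contradicting $\lct((X,B),|-(K_X+B)|_{\bQ})>1$, or $a(E;X,B)=0$, contradicting kltness of $(X,B)$). But two of your supporting claims are wrong as stated: the defining inequality of an $n$-complement is $nB^+\geq n\floor{B}+\floor{(n+1)\{B\}}$ (the $\leq$ in the paper is a typo), so it does \emph{not} force $\Supp B^+\subseteq\Supp B$ --- complements routinely acquire new components; and the nonzero coefficients of $B^+$ are bounded below by $\tfrac1n$ (because $nB^+$ is integral), not by the smallest nonzero element of $\Phi(\mathscr{R})$. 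Only $\Supp B\subseteq\Supp B^+$ is available, which fortunately is all you need at the end.

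The genuine gap is the middle step, the uniform $\epsilon$-lc bound for $(X,B^+)$. This is not a technical intermediate point: a uniform $\epsilon$ such that exceptional pairs are $\epsilon$-lc is itself one of the main theorems of \cite{Bir16a}, proved by a delicate induction interwoven with effective birationality, lower bounds for lc thresholds on bounded families, and complements in lower dimension. Your one-paragraph sketch does not close it: after a plt blowup and adjunction, the coefficients of $B_{E_i}$ lie only in a DCC set, not in $\Phi$ of a finite set; the pair $(E_i,B_{E_i})$ need not be exceptional nor of Fano type, so the inductive hypothesis (which is exactly the statement of Theorem \ref{excbdd} in dimension $d-1$) does not apply to it; and even granting boundedness of the $E_i$, one cannot conclude a contradiction with $\mld(X_i,B_i^+)\to 0$ without additional uniform lct/singularity bounds on bounded families. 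In short, the hardest part of the theorem is asserted rather than proved. The good news is that your detour through $\epsilon$-lc and Theorem \ref{bab2} is unnecessary: once you have the klt Calabi--Yau pair $(X,B^+)$ with $B^+\sim_{\bQ}-K_X$ big and coefficients in the finite set $\tfrac1n\mathbb{Z}\cap[0,1]$, Theorem \ref{hx} (Hacon--Xu) applies directly and gives (log) boundedness of $(X,B^+)$, hence of $(X,\Supp B)$ since $\Supp B\subseteq\Supp B^+$. With that replacement --- which is in fact the same mechanism the paper uses in its proof of Theorem \ref{thm4.2} --- your argument becomes a correct derivation of Theorem \ref{excbdd} from the quoted results, whereas as written it has a hole exactly where the real difficulty lies.
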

\subsection{Descending chain condition}
\begin{definition}
A set of real numbers $\mathscr{S}$ is said to {\it satisfy descending chain condition (DCC for short)} if for every non-empty subset $S$ of $\mathscr{S}$, there is a minimum element in $S$. $\mathscr{S}$ is called a DCC set if it satisfies DCC.
\end{definition}

We recall and improve slightly the following proposition of Birkar. It is shown for ample divisors $D$ and $A$ in {\cite[2.31(2)]{Bir16a}}. We modify it for nef and big divisors $A$ and $D$.
\begin{prop}\label{birkar}
Let $(X,B)$ be a log pair of dimension $d$.
Let $D$ and $A$ be big and nef $\bQ$-Cartier $\mathbb{Q}$-divisors on $X$.
Assume that $D^d>(2d)^d$. Then there is a bounded family $\mathcal{P}$ of subvarieties of $X$ such that for each pair $x$, $y\in X$ of general closed points, there is a member $G$ of $\mathcal{P}$ and an effective divisor $\Delta\sim_{\mathbb{Q}}D+(d-1)A$ such that 
\begin{enumerate}
\item $(X,B+\Delta)$ is lc near $x$ with a unique non-klt place whose centre is $G$,
\item $(X,B+\Delta)$ is not klt at $y$,
\item either $\dim G=0$ or $A^{d-\dim G}\cdot G\leq d^d$.
\end{enumerate}
\end{prop}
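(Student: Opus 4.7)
The plan is to deduce this from Birkar's original version [Bir16a, 2.31(2)] for ample divisors by a perturbation argument. The key observation is that $\bQ$-linear equivalence classes can be modified by effective divisors avoiding the general points $x$ and $y$ without disturbing local log canonicity properties near $\{x,y\}$, so any approximation of $D, A$ by nearby ample divisors can be ``corrected'' by adding effective pieces which are invisible to the singularity conditions at the general points.

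Since $D$ and $A$ are nef and big, Kodaira's lemma yields, for any sufficiently small $\eta > 0$, decompositions
\[ D \sim_\bQ D_\eta + E_\eta, \qquad A \sim_\bQ A_\eta + F_\eta, \]
where $D_\eta, A_\eta$ are ample $\bQ$-Cartier divisors whose numerical classes are within $\eta$ of those of $D, A$ in a fixed norm on $N^1(X)_\bR$, and $E_\eta, F_\eta$ are effective $\bQ$-divisors. By continuity of top self-intersection, $(D_\eta)^d > (2d)^d$ for $\eta$ small enough. Applying the ample case of Birkar's proposition to $(X,B)$ with $D_\eta, A_\eta$ yields a bounded family $\mathcal{P}$ of subvarieties, and for each pair of general closed points $x, y \in X$ a subvariety $G \in \mathcal{P}$ together with an effective divisor $\Delta_0 \sim_\bQ D_\eta + (d-1) A_\eta$ satisfying analogues of (1)--(3) with $A_\eta$ in place of $A$. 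Setting $\Delta := \Delta_0 + E_\eta + (d-1)F_\eta$, I have $\Delta \sim_\bQ D + (d-1) A$; since general $x, y$ may be assumed to miss $\Supp(E_\eta) \cup \Supp(F_\eta)$, the divisor $\Delta$ coincides with $\Delta_0$ in a neighborhood of $\{x, y\}$, and (1) and (2) are inherited directly.

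The main obstacle is verifying (3) for the original nef and big divisor $A$ rather than for $A_\eta$. Boundedness of $\mathcal{P}$ implies that the intersection numbers of any fixed divisor class on $X$ with members of $\mathcal{P}$ are uniformly bounded by a constant $C(\mathcal{P})$; expanding $A^{d-\dim G} \cdot G = (A_\eta + F_\eta)^{d-\dim G} \cdot G$ via the binomial formula then shows the discrepancy $A^{d-\dim G} \cdot G - A_\eta^{d-\dim G} \cdot G$ is $O(\eta)$ with the implicit constant depending only on $C(\mathcal{P})$, $d$, and the chosen norm. Inspecting Birkar's proof of the ample case shows that the bound $A_\eta^{d-\dim G} \cdot G \leq d^d$ carries some slack coming from the strictness of the hypothesis $(D_\eta)^d > (2d)^d$; by choosing $\eta$ small enough relative to this slack one absorbs the perturbation error into it and obtains $A^{d-\dim G} \cdot G \leq d^d$, completing the proof.
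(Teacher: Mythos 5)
Your reduction to the ample case breaks down exactly at the point you flag as the ``main obstacle,'' and the fix you propose is not a proof. The constant $d^d$ in conclusion (3) is a \emph{stopping threshold} in the inductive cutting argument: the process halts as soon as the relevant intersection number of $A_\eta$ with the current non-klt centre fails to exceed $d^d$, so the output $G$ may satisfy $A_\eta^{d-\dim G}\cdot G$ arbitrarily close to (or equal to) $d^d$. This bound has no quantitative link to the strictness of the hypothesis $(D_\eta)^d>(2d)^d$; a large excess in $D^d$ gives no room in (3). Hence there is no ``slack'' to absorb your $O(\eta)$ perturbation error, and the inequality $A^{d-\dim G}\cdot G\leq d^d$ for the original nef $A$ does not follow (you would only get $d^d+O(\eta)$, i.e.\ a weaker statement with a worse constant). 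There is also a circularity in the choice of $\eta$: the bounded family $\mathcal{P}$ is produced by applying the ample-case proposition to $D_\eta, A_\eta$, so $\mathcal{P}$, and with it the constant $C(\mathcal{P})$ controlling your error term, depends on $\eta$; ``choose $\eta$ small enough relative to the slack'' is therefore not available without proving that $C(\mathcal{P}_\eta)$ stays bounded as $\eta\to 0$, which you do not address. (The parts of your argument handling (1) and (2) --- adding the effective pieces $E_\eta+(d-1)F_\eta$, which general $x,y$ avoid --- are fine and are also how the paper deals with effective corrections.)

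The paper avoids this problem by not using the ample-case statement as a black box: it reruns the induction itself. The first step, isolating a non-klt centre from $D^d>(2d)^d$, is quoted from \cite[7.1]{HMX14}, which already works for big (in particular big and nef) $D$. Then, at an inductive step where cutting is needed, one already \emph{knows} the strict inequality $A^{d-\dim G_i}\cdot G_i>d^d$, and only then writes $A=A_i+E_i$ with $A_i$ ample chosen so that $A_i^{d-\dim G_i}\cdot G_i>d^d$ still holds --- a choice that can be made uniformly because the relevant $G_i$ form a bounded family. The cutting is performed with the ample part $A_i$ via \cite[6.8.1]{Kol97} and \cite[2.32]{Bir16a}, and the effective remainder $E_i$ (together with the ample excess $(1-c)A_i$ and the $\delta$-terms) is absorbed into $\Delta_{i+1}$, harmlessly at the general points $x,y$. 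In this way the threshold in (3) is always tested against the original nef divisor $A$, so no slack or limiting argument is needed. If you want to salvage your approach, you would either have to weaken (3) to a constant like $d^d+\varepsilon$ (or $2^dd^d$), which is not the statement to be proved, or move the perturbation inside the induction as the paper does.
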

\begin{proof}

First, by {\cite[7.1]{HMX14}}, there is a bounded family $\mathcal{P}_0$ of subvarieties of $X$ such that for each pair $x$, $y\in X$ of general closed points, there is a member $G_0$ of $\mathcal{P}_0$ and an effective divisor $\Delta_0\sim_{\mathbb{Q}}D$ such that $(X,B+\Delta_0)$ is lc near $x$ with a unique non-klt place whose centre is $G_0$ and $(X,B+\Delta_0)$ is not klt at $y$.

Now suppose for some $0\leq i\leq d-2$, we are given a family $X$ $\mathcal{P}_i$ of subvarieties of $X$ such that for each pair $x$, $y\in X$ of general closed points, there is a member $G_i$ of $\mathcal{P}_i$ and an effective divisor $\Delta_i\sim_{\mathbb{Q}}D+iA$ such that 

\begin{enumerate}
\item $(X,B+\Delta_i)$ is lc near $x$ with a unique non-klt place whose centre is $G_i$,
\item $(X,B+\Delta_i)$ is not klt at $y$,
\item either $\dim G_i\leq d-i-1$ or $A^{d-\dim G_i}\cdot G_i\leq d^d$.
\end{enumerate}

If either $\dim G_i=0$ or $A^{d-\dim G_i}\cdot G_i\leq d^d$, then we set $G_{i+1}=G_i$ and $\Delta_{i+1}=\Delta_i+A$. 

On the other hand, if $\dim G_i>0$ and $A^{d-\dim G_i}\cdot G_i> d^d$, then we can write $A=A_i+E_i$ for some ample $\bQ$-Cartier $\bQ$-divisor $A_i$ and effective $\bQ$-divisor $E_i$ such that $A_i^{d-\dim G_i}\cdot G_i> d^d$. Note that such a decomposition can be done independently of $G_i$ because $G_i\in\mathcal{P}_i$ with $A_i^{d-\dim G_i}\cdot G_i> d^d$ form a bounded family. By {\cite[6.8.1 and 6.8.1.3]{Kol97}} and {\cite[2.32]{Bir16a}}, there are positive rational numbers $\delta\ll 1$ and $c<1$, such that there is an effective $\bQ$-divisor $H\sim_{\bQ}A_i$ such that
\begin{enumerate}
\item $(X,B+(1-\delta)\Delta_i+cH)$ is lc near $x$ with a unique non-klt place whose centre is $G'_{i}$,
\item $(X,B+(1-\delta)\Delta_i+cH)$ is not klt at $y$,
\item $\dim G_{i}'<\dim G_i$.
\end{enumerate}
We set ${G_{i+1}}={G'_{i}}$ and $\Delta_{i+1}=(1-\delta)\Delta_i+cH+\delta(D+iA)+(1-c)A_i+E_i\sim\Delta+A$ in this case. Note that $D$, $A$, $A_i$ and $E_i$ are independent of $x$ and $y$. 

Set $\mathcal{P}_{i+1}=\{G_{i+1}\}$, then the proposition follows from induction on $i$. Note that either $\dim G_i\leq d-i-1$ or $A^{d-\dim G_i}\cdot G_i\leq d^d$ implies $\mathcal{P}=\mathcal{P}_{d-1}$ is bounded.
\end{proof}

Next, we recall the following lemma by Jiang, which aims to cut down the dimension of $G$ to $0$ in the previous proposition. Jiang shows it in {\cite[3.1]{Jia17}} for $H=-K_X$ being ample. In fact, the proof works under the following setting.
\begin{lem}\label{jiang1}
Fix positive integers $d>k$. Let $(X,B)$ be a projective normal klt pair of dimension $d$ and $H$ be a nef and big $\bQ$-Cartier divisor on $X$. Assume there is a morphism $f:Y\rightarrow T$ of projective varieties with a surjective morphism $\phi :Y\rightarrow X$ such that a general fiber $F$ of $f$ is of dimension $k$ and $\phi|_F : F\rightarrow \phi(F)=G$ is birational, then
\[H^k\cdot G\geq\frac{\mathrm{lct}((X,B),|H|_{\bQ})^{d-k}}{\binom{d}{k}(d-k)^{d-k}}H^d.\]
\end{lem}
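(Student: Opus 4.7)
The plan is to construct, for $m$ sufficiently divisible, an effective $\bQ$-divisor $D \sim_\bQ H$ with $\mult_G D$ as large as volume-counting permits, and then bound $\mult_G D$ from above using the log canonical threshold hypothesis together with a blowup of $G$.

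Set $\alpha := \lct((X,B),|H|_\bQ)$. The first step is a Hilbert--Samuel type dimension count that produces sections of $\OO_X(mH)$ vanishing to high order along $G$. Filter $\OO_X$ by ideal powers $\III_G^j$; at a generic smooth point of $G \subset X$, the graded piece $\III_G^j/\III_G^{j+1}$ is the $j$-th symmetric power of the conormal bundle of $G$, of rank $\binom{j+d-k-1}{d-k-1}$. Since the family $\{G_t\}_{t\in T}$ dominates $X$ (as $\phi$ is surjective), writing $H \sim_\bQ A+E$ with $A$ ample and $E$ effective shows that $H|_G$ is big on a general $G$ (which avoids $\mathrm{Supp}\,E$). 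An asymptotic Riemann--Roch argument then yields
\[
h^0\bigl(X,\OO_X(mH)\bigr) - h^0\bigl(X,\OO_X(mH)\otimes\III_G^{\ell+1}\bigr) \leq \binom{\ell+d-k}{d-k}\cdot\frac{m^k}{k!}(H^k\cdot G) + o(m^d),
\]
which combined with $h^0(X,mH) = \frac{m^d}{d!}H^d + o(m^d)$ shows that $H^0(X,\OO_X(mH)\otimes\III_G^{\ell+1}) \neq 0$ whenever $\ell^{d-k} < \frac{m^{d-k}H^d}{\binom{d}{k}(H^k\cdot G)}$. Choosing $\ell$ essentially saturating this bound and setting $D := \frac{1}{m}D_m$ for a corresponding $D_m \in |mH|$ gives $D \sim_\bQ H$ with $\mult_G D$ approaching $\bigl(\frac{H^d}{\binom{d}{k}H^k\cdot G}\bigr)^{1/(d-k)}$ as $m \to \infty$.

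For the second step, since the family covers $X$ and the general fiber $F$ of $f$ is birational to $G$ and generically smooth, I may choose $G$ passing through a point where both $X$ and $G$ are smooth. Blowing up $G$ there yields an exceptional divisor $E'$ with $a(E',X,0) = d-k$, so $a(E',X,\alpha D) = (d-k) - \alpha\cdot\mult_G D$. By definition of $\alpha$, $(X,B+\alpha D)$ is lc; since $B \geq 0$, also $(X,\alpha D)$ is lc, forcing $\alpha\cdot\mult_G D \leq d-k$. Letting $m \to \infty$ and rearranging gives $\alpha\bigl(\frac{H^d}{\binom{d}{k}H^k\cdot G}\bigr)^{1/(d-k)} \leq d-k$, which is the desired inequality.

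The main technical point is the uniform Hilbert--Samuel estimate in the first step: one must sum Hilbert polynomials of the graded pieces $\III_G^j/\III_G^{j+1}$ over $j$ up to $\ell = O(m)$ while keeping error terms in $o(m^d)$, and handle possible torsion of these sheaves away from the smooth locus of $G \subset X$. This is essentially the computation carried out by Jiang in \cite[Lemma 3.1]{Jia17} for $H = -K_X$ ample; the only new ingredient required to adapt it is the bigness of $H|_G$ for general members of a covering family, which makes the estimate go through for any big and nef $\bQ$-Cartier $H$.
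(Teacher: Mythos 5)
Your overall two-step skeleton --- produce $D\sim_{\bQ}H$ with large $\mult_G D$ by a dimension count, then bound $\mult_G D$ via the log canonical threshold and a codimension-$(d-k)$ discrepancy computation --- is the same as the paper's, and your second step is essentially fine (one quibble: $K_X$ alone need not be $\bQ$-Cartier, so ``$(X,\alpha D)$ is lc'' should be replaced by the local statement $a(E';X,B+\alpha D)\geq 0$ at the generic point of $G$, where $X$ is smooth because $G$ moves in a dominating family; this is what the paper does via \cite[Lemma 2.29]{KM}). The genuine gap is in the first step. You filter by ordinary powers $\III_G^j$ on $X$ and assert, via ``asymptotic Riemann--Roch'', that the number of conditions imposed by vanishing to order $\ell+1$ along $G$ is at most $\binom{\ell+d-k}{d-k}\frac{m^k}{k!}(H^k\cdot G)+o(m^d)$. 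Unwinding this, you need the uniform bound $h^0\bigl(G,S^jN^{*}_{G/X}\otimes\OO_G(mH)\bigr)\leq\binom{j+d-k-1}{d-k-1}\bigl(\frac{m^k}{k!}H^k\cdot G+o(m^k)\bigr)$ for all $j$ up to $\ell=O(m)$, plus control of the torsion of $\III_G^j/\III_G^{j+1}$. That bound is simply not true for an arbitrary subvariety $G$: rank-counting is not enough, and if the conormal sheaf has a positive subsheaf then $h^0(S^jN^{*}\otimes mH|_G)$ can grow like $(m+cj)^k$, and summing over $j$ up to $\ell\sim\epsilon m$ changes the leading constant --- which is exactly the constant $\binom{d}{k}$ your final inequality depends on. The hypothesis that $G$ is the image of a general fiber of a covering family is what must rescue this, but you never use it here; the ``bigness of $H|_G$'' observation you offer is not the relevant ingredient (the paper only needs nefness of $\phi^{*}H|_F$ to identify the volume with $(\phi^{*}H)^k\cdot F=H^k\cdot G$).

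The paper's proof is built precisely to avoid this issue: after normalizing and resolving $Y$ and $T$ and cutting $T$ by general hyperplanes so that $\phi$ is generically finite, it works with the general fiber $F$ of $f$, whose conormal bundle is \emph{trivial}, so that $\III_F^{i-1}/\III_F^{i}\cong\OO_F^{\oplus\binom{d-k+i-2}{d-k-1}}$ exactly and the jet count is clean with no error terms; the comparison with $X$ is made through the injection $\OO_X/\III_G^{\langle m\rangle}\hookrightarrow\phi_{*}(\OO_Y/\III_F^{m})$, where $\III_G^{\langle m\rangle}$ is the symbolic power (vanishing to order $\geq m$ at the generic point of $G$), which is also the right ideal for concluding $\mult_G D\geq 1$. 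Note that Jiang's Lemma 3.1, which you invoke as having ``essentially carried out'' your computation, performs it in exactly this fiberwise way, not on $X$ with $\III_G^j$; so the step you defer to the literature is not there in the form you need. To repair the write-up, either reproduce this reduction to the fiber $F$, or prove directly that for a general member of a dominating family the conormal sheaf is generically semi-negative in a sense strong enough to give the uniform $h^0$ bound above --- which amounts to the same work.
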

\begin{proof}
We follow the proof of {\cite[3.1]{Jia17}}. Taking normalizations and resolutions of $Y$ and $T$, we may assume they are smooth. Cutting by general hyperplane sections of $T$, we may assume $\phi$ is generically finite. Therefore, it holds that $\dim Y=d$.
Let $\mathcal{I}^{<m>}_G$ (resp. $\mathcal{I}^{<m>}_F$) be the sheaf of ideal of regular functions vanishing along $G$ (resp. $F$) to order at least $m$. Then  $\mathcal{I}^{<m>}_F=\mathcal{I}^{m}_F$, where $\mathcal{I}_F$ denotes the ideal sheaf of $F$.
So we have a natural injection
\[\mathcal{O}_X/\mathcal{I}^{<m>}_G\rightarrow\phi_*(\mathcal{O}_Y/\mathcal{I}^{m}_F).\]

Now we consider a rational number $l>0$ and a positive integer $m$ such that $lmH$ is Cartier. By the projection formula, we have
\begin{align*}
&h^0(X,\mathcal{O}_X(lmH)\otimes\mathcal{O}_X/\mathcal{I}^{<m>}_G)\\
\geq&h^0(X,\mathcal{O}_X(lmH)\otimes\phi_*(\mathcal{O}_Y/\mathcal{I}^{m}_F))\\
=&h^0(Y,\phi^*\mathcal{O}_X(lmH)\otimes\mathcal{O}_Y/\mathcal{I}^{m}_F).
\end{align*}
On the other hand, since $F$ is a general fiber of $f$, the conormal sheaf of $F$ is trivial. That is, we have $\mathcal{I}/\mathcal{I}^2\simeq\mathcal{O}_F^{\oplus(d-k)}$. Furthermore, we have
\[\mathcal{I}^{i-1}/\mathcal{I}^i\simeq S^{i-1}(\mathcal{I}/\mathcal{I}^2)\simeq\mathcal{O}_F^{\oplus\binom{d-k+i-2}{d-k-1}}\]
for every $i\geq 1$ (see {\cite [II. Theorem 8.24]{hart}}). Hence,
\begin{align*}
&h^0(Y,\phi^*\mathcal{O}_X(lmH)\otimes\mathcal{O}_Y/\mathcal{I}^{m}_F)\\
\leq&\sum_{i=1}^mh^0(Y,\phi^*\mathcal{O}_X(lmH)\otimes\mathcal{I}^{i-1}_F)/\mathcal{I}^{i}_F)\\
=&\sum_{i=1}^m\binom{d-k+i-2}{d-k-1}h^0(Y,\phi^*\mathcal{O}_X(lmH)\otimes\mathcal{O}_F)\\
=&\binom{d-k+m-1}{d-k}h^0(F,\phi^*\mathcal{O}_X(lmH)|_F).\\
\end{align*}
Now we consider the exact sequence
\[0\rightarrow\mathcal{O}_X(lmH)\otimes\mathcal{I}^{<m>}_G\rightarrow\mathcal{O}_X(lmH)\rightarrow\mathcal{O}_X(lmH)\otimes\mathcal{O}_X/\mathcal{I}^{<m>}_G\rightarrow0,\]
which implies
\begin{align*}
&h^0(X,\mathcal{O}_X(lmH)\otimes\mathcal{I}^{<m>}_G)\\
\geq&h^0(X,\mathcal{O}_X(lmH))-h^0(X,\mathcal{O}_X(lmH)\otimes\mathcal{O}_X/\mathcal{I}^{<m>}_G)\\
\geq&h^0(X,\mathcal{O}_X(lmH))-\binom{d-k+m-1}{d-k}h^0(F,\phi^*\mathcal{O}_X(lmH)|_F).\\
\end{align*}
We fix $l$ and consider the asymptotic behavior of the last two terms as $m$ goes to infinity.
By the definition of volume,
\[\lim_{m\to\infty}\frac{d!}{m^d}h^0(X,\mathcal{O}_X(lmH))=\mathrm{vol}(\mathcal{O}_X(lH))=l^dH^d.\]
On the other hand,
\begin{align*}
&\lim_{m\to\infty}\frac{d!}{m^d}\binom{d-k+m-1}{d-k}h^0(F,\phi^*\mathcal{O}_X(lmH)|_F)\\
=&\lim_{m\to\infty}\frac{d!m^{d-k}}{m^d(d-k)!}\frac{\mathrm{vol}(\phi^*\mathcal{O}_X(lH)|_F)m^k}{k!}\\
=&\binom{d}{k}\mathrm{vol}(\phi^*\mathcal{O}_X(lH)|_F)\\
=&\binom{d}{k}(\phi^*(lH)|_F)^k\\
=&\binom{d}{k}(\phi^*(lH))^k\cdot F\\
=&\binom{d}{k}(lH)^k\cdot G\\
=&\binom{d}{k}l^k(H)^k\cdot G.
\end{align*}
Consequently, for $l>\sqrt[d-k]{\binom{d}{k}\frac{(H)^k\cdot G}{H^d}}$ and $m$ sufficiently large, we have
\[h^0(X,\mathcal{O}_X(lmH))>\binom{d-k+m-1}{d-k}h^0(F,\phi^*\mathcal{O}_X(lmH)|_F).\]
Therefore $h^0(X,\mathcal{O}_X(lmH)\otimes\mathcal{I}^{<m>}_G)>0$, so there is an effective $\bQ$-divisor $D\sim_{\bQ}H$ such that $\mathrm{mult}_GD\geq1.$
Let $X_{sm}$ be the smooth locus of $X$. Since $\phi$ is surjective and $G$ is the image of a general fiber $F$ of $f$, $G|_{X_{sm}}$ is not empty. By {\cite [Lemma 2.29] {KM}}, $({X_{sm}},(d-k)D|_{X_{sm}})$ is not klt along  $G|_{X_{sm}}$. So $(X,B+(d-k)D)$ is not klt. Hence,
\[(d-k)l\geq\mathrm{lct}((X,B),\frac{1}{l}D)\geq\mathrm{lct}((X,B),|H|_{\bQ}).\]
Since $l$ is chosen arbitrarily such that $l>\sqrt[d-k]{\binom{d}{k}\frac{(H)^k\cdot G}{H^d}}$, we have
\[(d-k)\sqrt[d-k]{\binom{d}{k}\frac{(H)^k\cdot G}{H^d}}\geq\mathrm{lct}((X,B),|H|_{\bQ}).\]
That is,
\[H^k\cdot G\geq\frac{\mathrm{lct}((X,B),|H|_{\bQ})^{d-k}}{\binom{d}{k}(d-k)^{d-k}}H^d\]
holds.
\end{proof}

\begin{lem}\label{jiang}
Under the setting of Lemma \ref{jiang1}, there is a proper closed subset $S$ of $T$, such that
\[H^k\cdot \phi(F')\geq\frac{\mathrm{lct}((X,B),|H|_{\bQ})^{d-k}}{\binom{d}{k}(d-k)^{d-k}}H^d\]
for every fiber $F'$ of $f$ over $T-S$, and $\phi|_{f^{-1}(S)}$ does not dominate $X$.
\end{lem}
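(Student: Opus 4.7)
The plan is to extract the needed uniformity from Lemma \ref{jiang1} by working in a flat family of fibers of $f$, using that intersection numbers are constant in flat families and that $\phi|_{F'}$ is generically birational to its image. The inequality is already known for a general fiber $F$, so the content is upgrading ``general'' to ``every fiber over an explicit Zariski open set $T\setminus S$'' while simultaneously ensuring $\phi(f^{-1}(S))$ is not all of $X$.

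First I would carry over the reductions performed at the start of the proof of Lemma \ref{jiang1}: normalizing and resolving, and cutting $T$ by hyperplanes so that $\phi\colon Y\rightarrow X$ is generically finite and $\dim Y=d$; in particular $Y$ may be assumed smooth, irreducible, of dimension $d$, and $\dim T=d-k$. Then by generic flatness, together with generic smoothness of fibers and upper semicontinuity of fiber dimensions both for $f$ and for $\phi$ restricted to fibers of $f$, I would choose a non-empty open $U\subset T$ such that $f|_{f^{-1}(U)}$ is flat with every fiber of pure dimension $k$, and such that for every $t\in U$ the restriction $\phi|_{f^{-1}(t)}\colon f^{-1}(t)\rightarrow \phi(f^{-1}(t))$ is birational onto its image. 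Set $S=T\setminus U$.

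Next I would compute, for each $t\in U$ and $F'=f^{-1}(t)$, that
\[
H^{k}\cdot \phi(F')\;=\;(\phi^{*}H)^{k}\cdot F',
\]
using the projection formula together with $\phi|_{F'}$ birational onto $\phi(F')$. Because $\{F'\}_{t\in U}$ is a flat family of cycles on $Y$, the cycles $F'$ are algebraically equivalent on $Y$, hence the intersection number $(\phi^{*}H)^{k}\cdot F'$ is independent of $t\in U$. Taking $t_{0}\in U$ general and applying Lemma \ref{jiang1} to $F=f^{-1}(t_{0})$ and $G=\phi(F)$ yields
\[
(\phi^{*}H)^{k}\cdot F_{t_{0}}\;=\;H^{k}\cdot G\;\geq\;\frac{\mathrm{lct}((X,B),|H|_{\bQ})^{d-k}}{\binom{d}{k}(d-k)^{d-k}}\,H^{d},
\]
and by the constancy above the same lower bound holds for $H^{k}\cdot\phi(F')$ for every $t\in U$.

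It remains to see that $\phi|_{f^{-1}(S)}$ does not dominate $X$. Since $Y$ is irreducible of dimension $d$ and $f$ is surjective with $U\subset T$ a non-empty open, $f^{-1}(S)$ is a proper closed subvariety of $Y$, so $\dim f^{-1}(S)\leq d-1$; therefore $\phi(f^{-1}(S))$ has dimension strictly less than $\dim X=d$, so it cannot dominate $X$. The only delicate step I anticipate is the choice of $U$ in the second paragraph, which requires combining generic flatness of $f$ with upper semicontinuity of the dimension of the image of $\phi$ restricted to fibers of $f$; everything else is a routine application of the projection formula, flatness, and Lemma \ref{jiang1}.
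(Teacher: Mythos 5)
Your central idea---putting the fibers over a dense open $U\subset T$ into a flat family and using constancy of intersection numbers to transfer the bound of Lemma \ref{jiang1} from a general fiber to every fiber over $U$---is a reasonable way to get the inequality over some open set, but even for that step the choice of $U$ needs more than generic flatness, generic smoothness and semicontinuity of fiber dimension: you must arrange that every scheme-theoretic fiber over $U$ is generically reduced and maps birationally onto its image, since otherwise $\phi_*[F']$ is a multiple of (or strictly larger than) the cycle $[\phi(F')]$, and the constant number $(\phi^*H)^k\cdot F'$ then overestimates $H^k\cdot\phi(F')$, i.e.\ the transfer goes the wrong way. That "birational onto image for \emph{every} $t\in U$" statement is a constructibility/spreading-out argument (e.g.\ via the graph $Y\to Z\subset T\times X$), not a semicontinuity one; it can be supplied, but as written it is asserted rather than proved.

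The genuine gap, however, is structural. You "carry over the reductions" of Lemma \ref{jiang1}, in particular cutting $T$ by general hyperplane sections so that $\phi$ is generically finite and $\dim Y=d$, and your final step ($\dim f^{-1}(S)\le d-1$, hence $\phi(f^{-1}(S))\neq X$) uses $\dim Y=d$ essentially. But Lemma \ref{jiang} is a statement about the \emph{original} $f\colon Y\to T$: it must produce a closed $S$ in the original $T$ controlling \emph{every} fiber over $T-S$, and this is exactly how it is used in the proofs of Theorems \ref{thm2} and \ref{thm1}, where the given families $\phi_j\colon V_j\to T_j$ are fixed and one needs $\pi_j|_{\phi_j^{-1}(S_j)}$ non-dominant so that the general points $x,y$ avoid $\pi_j(\phi_j^{-1}(S_j))$ and the particular (not general) fiber whose image is $G_{xy}$ lies over $T_j-S_j$. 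After cutting $T$ you only control fibers over a proper subvariety of $T$, so what you prove is not the lemma; while if you run the flatness argument on the original family (which is possible), then $\dim Y=\dim T+k$ may well exceed $d$, the dimension count collapses, and the non-domination claim for $S=T\setminus U$ is simply not established: the preimage of a proper closed subset of $T$ can have dimension $\ge d$ and can dominate $X$ (e.g.\ a family with redundant parameters, $Y=Y_0\times C\to T=T_0\times C$, $S=T_0\times\{c\}$). This is precisely the difficulty the paper's proof is designed around: it argues by Noetherian induction, re-applying Lemma \ref{jiang1} to the restricted family over every closed subset $T'\subset T$ whose preimage still dominates $X$, shrinking the bad locus until its preimage no longer dominates. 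A single open set produced by generic flatness cannot deliver that second half of the conclusion, and that half is the one the applications actually rely on.
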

\begin{proof}
We apply the Noetherian induction (see, for example, {\cite[II, exercise 3.16]{hart}}) on Lemma \ref{jiang1}. Assume that $T'$ is a closed subset of $T$. Assume that for any proper closed subset $U$ of $T'$ there is a closed subset $V$of $U$ such that 
\[H^k\cdot \phi(F')\geq\frac{\mathrm{lct}((X,B),|H|_{\bQ})^{d-k}}{\binom{d}{k}(d-k)^{d-k}}H^d\]
for every fiber $F'$ of $f$ over $U-V$, and $\phi|_{f^{-1}(V)}$ does not dominate $X$. If $\phi|_{f^{-1}(T')}$ does not dominate $X$, we set $S'=T'$.
On the other hand, if $\phi|_{f^{-1}(T')}$ dominates $X$, by Lemma \ref{jiang1}, there is a proper closed subset $U'$ of $T'$, such that
\[H^k\cdot \phi(F')\geq\frac{\mathrm{lct}((X,B),|H|_{\bQ})^{d-k}}{\binom{d}{k}(d-k)^{d-k}}H^d\]
for every fiber $F'$ of $f$ over $T'-U'$. By the assumption, there is a closed subset $V'$of $U'$ such that 
\[H^k\cdot \phi(F')\geq\frac{\mathrm{lct}((X,B),|H|_{\bQ})^{d-k}}{\binom{d}{k}(d-k)^{d-k}}H^d\]
for every fiber $F'$ of $f$ over $U'-V'$, and $\phi|_{f^{-1}(V')}$ does not dominate $X$. We set $S'=V'$ in this case.

Now we have a closed subset $S'$ of $T'$, such that 
\[H^k\cdot \phi(F')\geq\frac{\mathrm{lct}((X,B),|H|_{\bQ})^{d-k}}{\binom{d}{k}(d-k)^{d-k}}H^d\]
for every fiber $F'$ of $f$ over $T'-S'$, and $\phi|_{f^{-1}(S')}$ does not dominate $X$, and we are done. Note that $S$ must be a proper subset because $\phi$ is surjective.

\end{proof}
\section{Proofs of Theorems}\label{sec 3}
Now we restate and prove the theorems in Section 1.
\begin{thm}\label{thm2}
Fix a positive integer $d$ and a positive real number $\theta$. Then there is a number $m$ depending only on $d$ and $\theta$ such that if $X$ is a projective normal variety satisfying
\begin{enumerate}
\item $\dim X=d,$
\item there exists a boundary $B$ such that $(X, B)$ is klt,
\item there is a nef $\bQ$-Cartier $\bQ$-divisor $H$ on $X$ with $\mathrm{lct}((X,B),|H|_{\bQ})>\theta$,
\item $H^d>\theta$,
\end{enumerate}
then $|K_X+\ceil{B+mH}|$ defines a birational map. Moreover, if $X$ is $\bQ$-factorial, then $|K_X+\ceil{mH}|$ defines a birational map.
\end{thm}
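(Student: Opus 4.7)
The plan is to apply Proposition \ref{birkar} with $D=A=aH$ for a suitable $a=a(d,\theta)$, use Lemma \ref{jiang} to force the resulting non-klt centre to be a point, and conclude via Nadel/Kawamata--Viehweg vanishing in the spirit of the potentially birational divisor lemma. Concretely, choose $a>2d\theta^{-1/d}$ so that $(aH)^d=a^dH^d>a^d\theta>(2d)^d$, and apply Proposition \ref{birkar} to the klt pair $(X,B)$ with $D=A=aH$. For any two general closed points $x,y\in X$ this produces a bounded family $\mathcal{P}$ of subvarieties of $X$, a member $G\in\mathcal{P}$ containing $x$, and an effective divisor $\Delta_0\sim_{\bQ}adH$ such that $(X,B+\Delta_0)$ is lc near $x$ with a unique non-klt place centred at $G$, is not klt at $y$, and satisfies either $\dim G=0$ or a bounded-family estimate of the form $(A|_G)^{\dim G}\le d^d$, i.e., $H^{\dim G}\cdot G\le d^d/a^{\dim G}$.

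The key step is to rule out $\dim G>0$. Suppose $\dim G=k\ge 1$ for a general $x$. Since $\mathcal{P}$ is bounded and its dimension-$k$ stratum covers a dense open subset of $X$, there is a morphism $f:Y\to T$ parameterizing this stratum together with a dominant $\phi:Y\to X$ whose general fibre $F$ has dimension $k$ and satisfies $\phi|_F:F\to G$ birational. Lemma \ref{jiang} then yields
\[ H^k\cdot G \;\ge\; \frac{\mathrm{lct}((X,B),|H|_{\bQ})^{d-k}}{\binom{d}{k}(d-k)^{d-k}}\,H^d \;>\; \frac{\theta^{d-k+1}}{\binom{d}{k}(d-k)^{d-k}}, \]
using $\mathrm{lct}((X,B),|H|_{\bQ})>\theta$ and $H^d>\theta$. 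Together with $H^k\cdot G\le d^d/a^k$, this forces $a^k<d^d\binom{d}{k}(d-k)^{d-k}/\theta^{d-k+1}$, which we can violate for every $k=1,\ldots,d-1$ simultaneously by choosing $a=a(d,\theta)$ sufficiently large. Hence $\dim G=0$, so $G=\{x\}$.

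Finally, set $m=\lceil ad\rceil+1$, so $(m-ad)H$ is nef and big. A standard tie-breaking perturbation of $\Delta_0$ preserves the non-klt structure at $x$ and $y$ while making $\{x\}$ an isolated connected component of $\Nklt(X,B+\Delta_0)$, after which $\lceil B+mH\rceil-(B+\Delta_0)$ is $\bQ$-linearly equivalent to a nef and big divisor. Nadel/Kawamata--Viehweg vanishing then gives
\[ H^1\bigl(X,\,\mathcal{J}(X,B+\Delta_0)\otimes\mathcal{O}_X(K_X+\lceil B+mH\rceil)\bigr)=0, \]
so sections over $\Nklt(X,B+\Delta_0)$ lift to $X$; the isolated component $\{x\}$ together with non-klt-ness at $y$ produces sections of $|K_X+\lceil B+mH\rceil|$ that separate $x$ and $y$, and birationality follows. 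The moreover statement comes from running the same argument with the klt pair $(X,0)$ in place of $(X,B)$, using the $\bQ$-factoriality of $X$ to ensure $K_X+\lceil mH\rceil$ is $\bQ$-Cartier. I expect the main obstacle to be the numerical balancing in the contradiction that forces $\dim G=0$, together with the tie-breaking needed to isolate the non-klt centre $\{x\}$ before applying Nadel vanishing.
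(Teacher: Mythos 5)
Your proposal is correct and follows essentially the same path as the paper: apply Proposition \ref{birkar} with suitable multiples of $H$ (you take $D=A=aH$, the paper takes $D=qH$, $A=pH$ with two constants), use Lemma \ref{jiang} together with the bounded-family parameterization to force $\dim G_{xy}=0$ for general $x,y$, and then conclude birationality of $|K_X+\ceil{B+mH}|$. The only real difference is at the end, where the paper simply observes that $B+mH$ is potentially birational and cites {\cite[Lemma 2.3.4]{HMX13}}, whereas you unpack that lemma by hand via tie-breaking and Nadel vanishing (with the fractional part $\ceil{B+mH}-B-mH$ absorbed into the boundary), which is the standard proof of the cited result.
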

\begin{proof}
Suppose we are given $X$, $B$ and $H$ as in the assumption.

Set 
\begin{align*}
& q=\ceil{\frac{2d}{\sqrt[d]{\theta}}},\\
& p=\max_{1 \leq k \leq d-1}\left \{ \ceil{\sqrt[k]{\frac{\binom{d}{k}(d-k)^{d-k}d^d}{\theta^{d-k+1}}}}\right \}.
\end{align*}
Then by construction, $(qH)^d>(2d)^d$. 

Now apply Proposition \ref{birkar} with $D=qH$ and $A=pH$ and we have a bounded family $\mathcal{P}$ of subvarieties of $X$ such that for each pair $x,y\in X$ of general closed points, there is a member $G_{xy}$ of $\mathcal{P}$ and an effective divisor $\Delta\sim_{\mathbb{Q}}D+(d-1)A$ such that 
\begin{enumerate}
\item $(X,B+\Delta)$ is lc near $x$ with a unique non-klt place whose centre is $G_{xy}$,
\item $(X,B+\Delta)$ is not klt at $y$,
\item either $\dim G_{xy}=0$ or $A^{d-\dim G_{xy}}\cdot G_{xy}\leq d^d$.
\end{enumerate}

By {\cite [Lemma 2.21]{Bir16a}}, this means that there is a finite set $\{\phi_j :V_j\rightarrow T_j\}$ of projective varieties with surjective morphisms $\pi_j:V_j\rightarrow X$ such that each member $G\in\mathcal{P}$ is isomorphis through $\pi_j$ to a fiber of $\phi_j$ for some $j$.

Now consider a general fiber $G'$ of $\phi_j$ for each $j$. If $\dim G'>0$. By Lemma \ref{jiang}, there is a closed subset $S_j$ of $T_j$, such that
\[H^k\cdot G\geq\frac{\mathrm{lct}((X,B),|H|_{\bQ})^{d-k}}{\binom{d}{k}(d-k)^{d-k}}H^d>\frac{\theta^{d-k+1}}{\binom{d}{k}(d-k)^{d-k}}
\]
for every image $G$ of $\pi_j$ of a fiber $F'$ of $\phi_j$ over $T_j-S_j$, and $\pi_j|_{\phi_j^{-1}(S_j)}$ is not dominant. On the other hand, we set $S_j$ to be the empty set if $\dim G'=0$.

Now since $x$ and $y$ are general, they are not in $\pi_j(\phi_j^{-1}(S_j))$ for each $j$. So $G_{xy}$ is image of $\pi_j$ of a fiber of $\phi_j$ over $T_j-S_j$ for some $j$.
Suppose $\dim G_{xy}>0$, then by the definition of $p$, $A^k\cdot G'=(pH)^k\cdot G'>d^d.$ This contradicts (3) of Proposition \ref{birkar} and thus $\dim G'=0$ for any general fiber $G'$ of $\phi_j$ for each $j$.

Now $D+(d-1)A=(q+(d-1)p)H$. So $B+(q+(d-1)p+1)H$ is potentially birational and hence $|K_X+\ceil{B+(q+(d-1)p+1)H}|$ defines a birational map. Let $m=q+(d-1)p+1$ and we are done with the first statement.

If moreover, $X$ is $\bQ$-factorial, then $(X,0)$ is klt and $\mathrm{lct}((X,0),|H|_{\bQ})\geq\mathrm{lct}((X,B),|H|_{\bQ})>\theta$. Replacing $B$ by $0$, we are done.
\end{proof}

\begin{thm}\label{thm1}
Fix a positive integer $d$. There exists a number $M$, depending only on $d$, such that for any projective normal pair $(X,B)$ and for any big and nef $\bQ$-Cartier $\bQ$-divisor $H$ on $X$ satisfying
\begin{enumerate}
\item $\dim X=d,$
\item $(X, B)$ is klt,
\end{enumerate}
we have 
\[\mathrm{lct}((X,B),|H|_{\bQ})^{d-1}\overline{\tau}((X,B),H)H^d\leq M.\]
\end{thm}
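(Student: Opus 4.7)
My plan is to adapt the scheme behind the proof of Theorem~\ref{thm2}: apply Proposition~\ref{birkar} together with Lemma~\ref{jiang} to produce an effective divisor realising the required lc/non-klt structure, and then use Koll\'ar--Shokurov connectedness (Lemma~\ref{cnnlem}) to force an upper bound $\overline{\tau} \leq m$ on the anti-pseudo-effective threshold. The key starting observation is that $\lct((X,B),|H|_{\bQ})^{d-1}\,\overline{\tau}((X,B),H)\,H^d$ is invariant under rescaling $H \mapsto sH$ for $s \in \bQ_{>0}$; I would normalise $\ell := \lct((X,B),|H|_{\bQ}) = 1$, so Lemma~\ref{kollar} yields $H^d \leq d^d$. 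We may assume $\overline{\tau} > 0$ (otherwise the inequality is trivial), in which case $-(K_X+B)$ is big.

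Next I would run the construction of Theorem~\ref{thm2} with $D = qH$ and $A = pH$ for the same choices of $q$ and $p$ appearing there, which are now functions of $H^d$ alone (since $\ell = 1$), so that $D^d > (2d)^d$ and Lemma~\ref{jiang} forces every positive-dimensional candidate $G$ for the non-klt centre to violate condition (3) of Proposition~\ref{birkar}. A short calculation using $\ell=1$ and $H^d \leq d^d$ shows $qH^d$ and $pH^d$ are each bounded by constants depending only on $d$; hence so is $mH^d$ for $m := q + (d-1)p$. Proposition~\ref{birkar} then yields, for general $x, y \in X$, an effective $\Delta \sim_{\bQ} mH$ such that $(X, B + \Delta)$ is lc near $x$ with unique non-klt centre $\{x\}$ and fails to be klt at $y$. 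After a tie-breaking perturbation absorbed into a small $\delta > 0$, I pass to $\Delta' \sim_{\bQ} (m+\delta)H$ with $\{x\}$ an isolated connected component of $\Nklt(X, B+\Delta')$ but with some other component still meeting $y$; in particular $\Nklt(X, B+\Delta')$ is disconnected.

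The core claim is then $\overline{\tau} \leq m + \delta$, which letting $\delta \to 0$ gives $\overline{\tau}H^d \leq mH^d \leq M(d)$. Suppose for contradiction that $\overline{\tau} > m + \delta$: then $-(K_X+B) - (m+\delta)H$ splits as the pseudo-effective $-(K_X+B)-(\overline{\tau}-\eta)H$ plus the big nef $(\overline{\tau}-\eta-m-\delta)H$ for $\eta$ small, so $-(K_X+B+\Delta')$ is big. If it is also nef, Lemma~\ref{cnnlem} with $\pi = \mathrm{id}$ forces $\Nklt(X, B+\Delta')$ to be connected, contradicting the previous paragraph. The main obstacle is that $-(K_X+B+\Delta')$ need only be big and pseudo-effective, not nef, so Lemma~\ref{cnnlem} does not apply directly. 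To bridge this gap I would run a $(K_X+B+\Delta')$-MMP on the lc pair; since $K_X+B+\Delta'$ is anti-big, the MMP terminates in a Mori fibre space $\pi\colon X' \to Z$, and applying the relative version of Lemma~\ref{cnnlem} on a general fibre of $\pi$ (which contains both the image of $\{x\}$ and a non-klt point from the component through $y$, provided $x$ and $y$ are chosen generically inside a common fibre --- an open condition on the pair $(x,y)$) supplies the required contradiction.
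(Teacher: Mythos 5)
Your reduction (rescaling $H$, invoking Lemma \ref{kollar}, applying Proposition \ref{birkar} with $D,A$ proportional to $H$, and using Lemma \ref{jiang} to force the non-klt centre $G_{xy}$ to be a point) is essentially the paper's argument, up to the choice of normalisation ($\mathrm{lct}=1$ versus the paper's $\overline{\tau}((X,B),H)=2$). The genuine gap is in your last paragraph, where the anti-log-canonical divisor $-(K_X+B+\Delta')$ is only big and you try to restore nefness by running a $(K_X+B+\Delta')$-MMP to a Mori fibre space. First, $(X,B+\Delta')$ is in general not lc: Proposition \ref{birkar} only says it is not klt at $y$, and it may well be non-lc there; moreover $X$ is not assumed $\bQ$-factorial, so the MMP you invoke is not available as stated (and it would in any case depend on $\Delta'$, hence on the pair $(x,y)$, which is circular with the generality you need). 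Second, even granting a Mori fibre space $\pi\colon X'\to Z$, two \emph{general} points of $X$ do not lie over a common point of $Z$ once $\dim Z>0$; demanding that $x$ and $y$ sit in a common fibre is not an open (or general) condition and is incompatible with the generality required in Proposition \ref{birkar}. The relative form of Lemma \ref{cnnlem} then only gives connectedness of $\mathrm{Nklt}\cap X_s$ fibrewise, and since the isolated point $x$ and the component through $y$ may map to different points of $Z$, no contradiction results.

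The paper closes exactly this gap without any MMP: normalising $\overline{\tau}((X,B),H)=2$ and producing $\Delta\sim_{\bQ}H$, the divisor $-(K_X+B)-H$ is big, so one writes $-(K_X+B)-H=F+E$ with $F$ ample and $E\geq 0$, where $F,E$ are fixed \emph{before} choosing $x,y$. Since $x$ and $y$ are general they avoid $\operatorname{Supp}E$, so $\mathrm{Nklt}(X,B+\Delta+E)$ still has $\{x\}$ as an isolated component and still contains $y$, i.e.\ it is disconnected; but $-(K_X+B+\Delta+E)\sim_{\bQ}F$ is ample, so the absolute case of Lemma \ref{cnnlem} gives the desired contradiction. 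Replacing your MMP paragraph by this ``big $=$ ample $+$ effective, absorb the effective part into the boundary'' step (which works equally well with your normalisation, under the contradiction hypothesis $\overline{\tau}>m+\delta$) repairs the proof; as written, the final step does not go through.
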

\begin{proof}
By the linearity of lct, $\overline{\tau}$ and the volume function with respect to $H$, we may assume that $\overline{\tau}((X,B),H)=2$. So that $-(K_X+B)-H=-(K_X+B)-2H+H$ is big and then we can write $-(K_X+B)-H=F+E$ for some ample $\bQ$-Cartier $\bQ$-divisor $F$ and effective $\bQ$-divisor $E$. Assume the theorem fail to hold. Then for any number $M$, there is a pair $(X,B)$ a big and nef $\bQ$-Cartier $H$ on $X$ satisfying the assumptions, such that \[\mathrm{lct}((X,B),|H|_{\bQ})^{d-1}H^d=\mathrm{lct}((X,B),|H|_{\bQ})^{d-1}\frac{\overline{\tau}((X,B),H)}{2}H^d> \frac{M}{2}.\]
By Lemma \ref{kollar}, we have
\[\mathrm{lct}((X,B),|H|_{\bQ})^{d-1}H^d\leq \frac{d^d}{\mathrm{lct}((X,B),|H|_{\bQ})}.\] So we may assume $\text{lct}((X,B),|H|)\leq 1$ and $H^d>\frac{M}{2}\geq (4d)^d$.
Moreover, for every $0<k<d$, we may assume \[M>2\binom{d}{k}(d-k)^{d-k}(2(d-1)d)^d.\]

Now apply Proposition \ref{birkar} with $D=\frac{1}{2}H$ and $A=\frac{1}{2(d-1)}H$.  We have a bounded family $\mathcal{P}$ of subvarieties of $X$ such that for each pair $x,y\in X$ of general closed points, there is a member $G_{xy}$ of $\mathcal{P}$ and an effective divisor $\Delta\sim_{\mathbb{Q}}D+(d-1)A=H$ such that 
\begin{enumerate}
\item $(X,B+\Delta)$ is lc near $x$ with a unique non-klt place whose centre is $G_{xy}$,
\item $(X,B+\Delta)$ is not klt at $y$,
\item either $\dim G_{xy}=0$ or $A^{d-\dim G_{xy}}\cdot G_{xy}\leq d^d$.
\end{enumerate}

By {\cite [Lemma 2.21]{Bir16a}}, this means that there is a finite set $\{\phi_j :V_j\rightarrow T_j\}$ of projective varieties with surjective morphisms $\pi_j:V_j\rightarrow X$ such that each member $G\in\mathcal{P}$ is isomorphis through $\pi_j$ to a fiber of $\phi_j$ for some $j$.

Now consider a general fiber $G'$ of $\phi_j$ for each $j$. If $\dim G'>0$. By Lemma \ref{jiang}, there is a closed subset $S_j$ of $T_j$, such that
\[H^k\cdot G\geq\frac{\mathrm{lct}((X,B),|H|_{\bQ})^{d-k}}{\binom{d}{k}(d-k)^{d-k}}H^d
\]
for every image $G$ of $\pi_j$ of a fiber $F'$ of $\phi_j$ over $T_j-S_j$, and $\pi_j|_{\phi_j^{-1}(S_j)}$ is not dominant. On the other hand, we set $S_j$ to be the empty set if $\dim G'=0$.

Now since $x$ and $y$ are general, they are not in $\pi_j(\phi_j^{-1}(S_j))$ for each $j$. So $G_{xy}$ is image of $\pi_j$ of a fiber of $\phi_j$ over $T_j-S_j$ for some $j$.
Suppose $\dim G_{xy}>0$, then we have
 \begin{align*}
H^k\cdot G_{xy}&\geq\frac{\mathrm{lct}((X,B),|H|_{\bQ})^{d-k}}{\binom{d}{k}(d-k)^{d-k}}H^d\\
&\geq\frac{\mathrm{lct}((X,B),|H|_{\bQ})^{d-1}}{\binom{d}{k}(d-k)^{d-k}}H^d\\
&>\frac{M}{2\binom{d}{k}(d-k)^{d-k}}>(2(d-1)d)^d.
\end{align*}
By construction, $A^k\cdot G_{xy}=(\frac{1}{2(d-1)})^kH^k\cdot G_{xy}>d^d.$ This contradicts (3) of the above and thus $\dim G_{xy}=0$.

Now for general $x$ and $y$, Nklt$(X,B+\Delta)$ contains $y$ and $x$ and isolates $x$. Since $x$ is general, Nklt$(X,B+\Delta+E)$ also isolates $x$. By Lemma \ref{cnnlem}, $-K_X-(B+\Delta+E)\sim_{\mathbb{Q}}-(K_X+B)-H-E=F$ is not ample, which is a contradiction.
\end{proof}

We recall the following theorem by Hacon and Xu.
\begin{thm}$\mathrm{(}${\cite [Theorem 1.3]{HX15}}$\mathrm{)}$\label{hx}
Fix a positive integer $d$ and a DCC set $\mathscr{I}$ of rational numbers in $[0,1]$. The set of all projective normal pairs $(X,B)$ satisfying
\begin{enumerate}
\item $(X,B)$ is klt log Calabi--Yau of dimension $d$,
\item $B$ is big,
\item the coefficients of $B\in\mathscr{I}$,
\end{enumerate}
forms a bounded family.
\end{thm}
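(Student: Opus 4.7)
The plan is to reduce the statement to Birkar's BAB Theorem (Theorem \ref{bab}) via a perturbation, and then to bound $B$ on the resulting bounded family of $X$. Since $B$ is big and $K_X+B\sim_{\bR}0$, for any $t\in(0,1)$ the pair $(X,(1-t)B)$ is klt with $-(K_X+(1-t)B)=tB$ big, so it is klt weak Fano. If one can produce a uniform $\epsilon=\epsilon(d,\mathscr{I})>0$ so that $(X,(1-t)B)$ is $\epsilon$-lc for, say, $t=\tfrac12$, then Theorem \ref{bab} immediately bounds the underlying varieties $X$.

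The core step is the uniform $\epsilon$-lc estimate, which I would derive from the global ACC of Hacon--McKernan--Xu: among pairs $(X,B)$ of dimension $d$ with $K_X+B\equiv 0$ and boundary coefficients in the DCC set $\mathscr{I}$, only finitely many values of $\mathscr{I}$ can actually appear as coefficients, giving a finite subset $\mathscr{I}_0\subset\mathscr{I}$ depending only on $d$ and $\mathscr{I}$. In particular the nonzero coefficients of $B$ are bounded away from $0$. Combining this with ACC for log canonical thresholds, a standard contradiction by extraction of a limit pair shows that the minimal log discrepancy of $(X,B)$ is bounded below by some $\epsilon>0$; otherwise a sequence of counterexamples would produce, after passing to a limit, a violation of ACC for lct with coefficients in the finite set $\mathscr{I}_0$.

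Having bounded $X$, one bounds $B$ as follows: numerically $B\equiv -K_X$, so its degree against a fixed relatively ample polarization on the bounding family is bounded; since the nonzero coefficients lie in a fixed finite set, the number of irreducible components of $\Supp B$ is bounded, so $\Supp B$ varies in a bounded algebraic family, yielding log boundedness of $(X,B)$. The main obstacle is clearly the uniform $\epsilon$-lc estimate: it rests on the full ACC machinery of Hacon--McKernan--Xu, and without it neither the boundedness of $X$ nor the control of the coefficients of $B$ would be available. Everything else in the argument is essentially a packaging of known bounded-family statements around this one deep input.
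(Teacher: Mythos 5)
This statement is not proved in the paper at all: Theorem \ref{hx} is quoted verbatim from \cite{HX15}, so your proposal has to be judged on its own. Its pivotal step is the uniform $\epsilon$-lc estimate, and that is where there is a genuine gap. The first reduction is fine: by the global ACC of \cite{HMX14}, the coefficients of numerically trivial lc pairs of dimension $d$ with coefficients in the DCC set $\mathscr{I}$ lie in a finite subset $\mathscr{I}_0$. But the next claim --- that ACC for log canonical thresholds together with ``a standard contradiction by extraction of a limit pair'' forces the minimal log discrepancies of all such $(X,B)$ to be bounded below by a uniform $\epsilon>0$ --- does not work as stated. There is no limit pair to extract: a sequence $(X_n,B_n)$ lives on varieties that a priori belong to no bounded family (producing such a family is exactly what is being proved), and ACC for lct controls thresholds of auxiliary divisors on pairs, not the minimal log discrepancies of numerically trivial pairs on varying varieties. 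The statement you need is true, but it is a deep theorem of Birkar proved in \cite{Bir16b} alongside (and intertwined with) Theorem \ref{bab} --- roughly, that klt numerically trivial pairs of dimension $d$ with big boundary and coefficients bounded away from $0$ are $\epsilon$-lc for some $\epsilon(d,\mathscr{I})>0$ --- and it is not a formal consequence of the ACC results of \cite{HMX14}. Invoking it as an input would make your argument a legitimate (if anachronistic) alternative route, but deriving it by a compactness-style argument is not available.

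Granting that input, the rest of your plan is essentially sound: $(X,(1-t)B)$ is klt weak Fano, so Theorem \ref{bab} bounds the underlying $X$; then $B\equiv -K_X$ has bounded degree against a relatively ample polarization, the nonzero coefficients lie in the finite set $\mathscr{I}_0$ and hence are bounded away from $0$, so $\deg\operatorname{Supp}B$ and the number of components are bounded, giving log boundedness. Note also that this route is genuinely different from the actual proof in \cite{HX15}, which predates \cite{Bir16b} and does not use BAB. As written, however, the unjustified $\epsilon$-lc step leaves the proof incomplete.
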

Then we show the following log-version of {\cite [Lemma 2.26]{Bir16a}}.
\begin{lem}\label{qfact}
Fix positive integers $d$ and $k$. Let  $\mathcal{P}$ be a set of klt weak Fano pairs of dimension $d$. Assume that for every element $(Y,B_Y)\in\mathcal{P}$, there is a $k$-complement $K_Y+B^+_Y$ of $K_Y+B_Y$ such that $(X,B^+_Y)$ is klt and $B^+_Y\geq B_Y$. Let  $\mathcal{Q}$ be the set of normal projective pairs $(X,B)$ such that
\begin{enumerate}
\item there is $(Y,B_Y)\in\mathcal{P}$ and a birational map $X\dashrightarrow Y$,
\item there is a common resolution $\phi :W\rightarrow Y$ and $\psi:W\rightarrow X$,
\item $\phi^*(K_Y+B_Y)\geq\psi^*(K_X+B)$,
\end{enumerate}
Then for every element $(X,B)\in\mathcal{Q}$, there is a $k$-complement $K_X+B^+$ of $K_X+B$ such that $(X,B^+)$ is klt and $B^+\geq B$.
\end{lem}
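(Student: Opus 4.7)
The plan is to transfer the given $k$-complement from $Y$ to $X$ along the common resolution and then verify the conditions of a $k$-complement. Given $(X,B)\in\mathcal{Q}$ with associated $(Y,B_Y)\in\mathcal{P}$, common resolution $\phi\colon W\to Y$ and $\psi\colon W\to X$, and the hypothesized klt $k$-complement $K_Y+B_Y^+$, I set $K_X+B^+:=\psi_*\phi^*(K_Y+B_Y^+)$, so that each prime divisor $D$ on $X$ with strict transform $D'$ on $W$ receives coefficient $-a(D',Y,B_Y^+)$ in $B^+$. Because $k(K_Y+B_Y^+)\sim 0$, its pullback is principal on $W$ and its pushforward by the birational $\psi$ remains principal on $X$, so $k(K_X+B^+)\sim 0$; in particular $K_X+B^+$ is $\bQ$-Cartier.

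To see $B^+\geq B$, note that condition~(3) of the definition of $\mathcal{Q}$ reads $a(E,X,B)\geq a(E,Y,B_Y)$ for every prime divisor $E$ on $W$, while $B_Y^+\geq B_Y$ gives $a(E,Y,B_Y)\geq a(E,Y,B_Y^+)$. Chaining these and specialising to $E=D'$, one obtains the coefficient inequality $b_D=-a(D',X,B)\leq -a(D',Y,B_Y^+)=b_D^+$, and in particular $B^+\geq B\geq 0$.

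For klt-ness of $(X,B^+)$, take a further log resolution $\pi\colon W'\to W$ making $W'$ a log resolution of $(X,B^+)$, and set $\pi_X=\psi\circ\pi$ and $\pi_Y=\phi\circ\pi$. Consider the $\bQ$-Cartier divisor $\mathcal{F}:=\pi_Y^*(K_Y+B_Y^+)-\pi_X^*(K_X+B^+)$. Its pushforward under $\pi_X$ vanishes by the definition of $B^+$, so $\mathcal{F}$ is $\pi_X$-exceptional; and $k\mathcal{F}\sim 0$ globally on $W'$, so $-\mathcal{F}$ is $\pi_X$-numerically trivial, hence $\pi_X$-nef. The negativity lemma then forces $\mathcal{F}\geq 0$, so $a(E,X,B^+)\geq a(E,Y,B_Y^+)>-1$ for every prime divisor $E$ of $W'$. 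Thus $(X,B^+)$ is sub-klt, and since $B^+\geq 0$, klt.

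The remaining coefficient condition in the definition of $k$-complement is then checked divisor by divisor, using that the function $x\mapsto k\lfloor x\rfloor+\lfloor(k+1)\{x\}\rfloor$ is monotone on $[0,1]$ together with the corresponding condition on $Y$ and the inequality $b_D\leq b_{D''}$ for $D''$ the image on $Y$ of a prime divisor $D\subset X$ whose strict transform is not $\phi$-exceptional. I expect the main obstacle to be the klt step: the inequality $\phi^*(K_Y+B_Y)\geq\psi^*(K_X+B)$ is assumed only on $W$, but must be upgraded to the analogous inequality for $B_Y^+$ and $B^+$ on an arbitrary log resolution of $(X,B^+)$, and this is precisely where the negativity lemma applied to $\mathcal{F}$ is indispensable.
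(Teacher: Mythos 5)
Your construction is the same as the paper's: the paper simply takes $K_X+B^+$ to be the crepant pullback of $K_Y+B_Y^+$ through the common resolution, notes that klt-ness is preserved, and deduces $B^+\geq B$ from $\phi^*(K_Y+B_Y)\geq\psi^*(K_X+B)$ and $B_Y^+\geq B_Y$; your steps (pushforward of the pullback, $k(K_X+B^+)\sim 0$, the chained discrepancy inequalities, and the negativity-lemma argument showing the pullbacks agree so that sub-klt-ness descends) are exactly the detailed justification of that one-line proof, so the approach is essentially identical and correct in its main steps.

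The only incomplete spot is your final verification of the coefficient condition in the definition of a $k$-complement: the route via ``the corresponding condition on $Y$'' plus $b_D\leq b_{D''}$ only treats prime divisors $D\subset X$ whose strict transform is not $\phi$-exceptional, whereas the lemma allows divisors of $X$ contracted by $X\dashrightarrow Y$ (for such $D$ the coefficient $\mu_D B^+=-a(D',Y,B_Y^+)$ is not a coefficient of $B_Y^+$, so the condition on $Y$ cannot be quoted). This is closed immediately by facts you already established: $kB^+$ is integral (from $k(K_X+B^+)\sim 0$, or because it is the pushforward of the pullback of the integral Cartier divisor $k(K_Y+B_Y^+)$) and $B^+\geq B$, so for every prime divisor $k\mu_D B^+\geq\lceil k\mu_D B\rceil= k\lfloor\mu_D B\rfloor+\lceil k\{\mu_D B\}\rceil\geq k\lfloor\mu_D B\rfloor+\lfloor(k+1)\{\mu_D B\}\rfloor$, which is the required inequality (in Birkar's convention; note the ``$\leq$'' in the paper's definition of complement is a typo for ``$\geq$''). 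With that uniform argument substituted for your last step, the proof is complete.
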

\begin{proof}
Let $K_X+B^+$ be the crepant pullback of $K_Y+B_Y^+$ to $X$. Then $(X,B^+)$ is klt. Since $\phi^*(K_Y+B_Y)\geq\psi^*(K_X+B)$, $B^+-B\geq\psi_*\phi^*(B_Y^+-B_Y)\geq 0$ is big and $K_X+B^+$  is an $k$-complement of $K_X+B$.

\end{proof}

Next, we recall the following proposition of Birkar with a small observation.
\begin{prop}\label{birkar2}$\mathrm{(}${\cite[Proposition 4.4]{Bir16a}}$\mathrm{)}$
Fix positive integers $d$, $v$ and a positive real number $\epsilon$. Then there exists a bounded set of couples $\mathcal{P}$ and a positive real number $c$ depending only on $d$, $v$ and $\epsilon$ satisfies the following. Assume 
\begin{itemize}
	\item $X$ is a normal projective variety of dimension $d$,
	\item $B$ is an effective $\bR$-divisor with coefficient at least $\epsilon$,
	\item $M$ is a $\bQ$-divisor with $|M|$ defining a birational map,
	\item $M-(K_X+B)$ is pseudo-effective,
	\item $\mathrm{vol}(M)<v$, and
	\item $\mu_D(B+M)\geq 1$ for every component $D$ of $M$.
\end{itemize}
Then there is a projective log smooth couple $(\overline{W}, \Sigma_{\overline{W}})\in \mathcal{P}$, a birational map $\overline{W}\dashrightarrow X$ and a common resolution $X'$  of this map such that
\begin{enumerate}
\item Supp$\Sigma_{\overline{W}}$ contains the exceptional divisor of $\overline{W}\dashrightarrow X$ and the birational transform of Supp$(B+M)$,
\item there is a resolution $\phi : W \rightarrow X$ such that $M_W:=M|_W\sim A_W+R_W$ where $A_W$ is the movable part of $|M_W|$, $|A_W|$ is base point free, $X'\rightarrow X$ factors through $W$ and $A_{X'}:=A_W|_{X'}\sim 0/\overline{W}$.
\end{enumerate}
Moreover, if M is nef and $M_{\overline{W}}$ is the pushdown of $M_{X'}:=M|_{X'}$. Then each coefficient of $M_{\overline{W}}$ is at most $c$.
\end{prop}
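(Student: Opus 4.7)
The main content---existence of the bounded set $\mathcal{P}$ of couples together with conclusions (1) and (2)---is a direct quotation of \cite[Proposition 4.4]{Bir16a}, whose proof I do not reproduce. The only new content is the ``moreover'' clause, which adds a coefficient bound on $M_{\overline{W}}$ under the additional assumption that $M$ is nef, and this is what my plan must address.

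My plan is to pick, uniformly across the bounded family of log smooth couples $\mathcal{P}$, a very ample divisor $H$ on $\overline{W}$ with $H^d$ bounded above by a constant depending only on $\mathcal{P}$ and with $D\cdot H^{d-1}\ge 1$ for every prime component $D$ of $\Sigma_{\overline{W}}$. Since $M$ is effective, so is the pushdown $M_{\overline{W}}=\pi_*M_{X'}$ (where $\pi\colon X'\to\overline{W}$ is the morphism from the common resolution), and by (1) its support lies in $\Sigma_{\overline{W}}$. Writing $M_{\overline{W}}=\sum_i a_iD_i$ with $a_i\ge 0$ and each $D_i$ a prime component of $\Sigma_{\overline{W}}$, each coefficient satisfies $a_i\le M_{\overline{W}}\cdot H^{d-1}$, so the problem reduces to bounding this single intersection number uniformly.

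To do so I would exploit property (2). From $A_{X'}\sim 0/\overline{W}$ and the negativity lemma we get $A_{X'}=\pi^*A_{\overline{W}}$, where $A_{\overline{W}}:=\pi_*A_{X'}$ is a nef effective divisor on $\overline{W}$ with $A_{\overline{W}}^d=A_W^d\le M^d=\mathrm{vol}(M)<v$. Pushing down $M_{X'}\sim A_{X'}+R_{X'}$ then gives $M_{\overline{W}}=A_{\overline{W}}+\pi_*R_{X'}$. The piece $A_{\overline{W}}\cdot H^{d-1}$ is controlled by the fact that both $A_{\overline{W}}$ and $H$ are nef with bounded top self-intersections on $\overline{W}$ inside the polarised bounded family $\mathcal{P}$; the piece $\pi_*R_{X'}\cdot H^{d-1}$ is controlled by tracing the fixed part $R_W$ of $|M_W|$ back through Birkar's construction, since components of $\pi_*R_{X'}$ are birational transforms of components of $R_W$.

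The main obstacle is that bounding the mixed intersection $M_{X'}\cdot(\pi^*H)^{d-1}$ for arbitrary nef divisors in terms of their top self-intersections is impossible in general---for instance, on $\bP^1\times\bP^1$ one can exhibit nef $\bQ$-divisors $A$ and $B$ with bounded top self-intersections but $A\cdot B$ arbitrarily large. Thus the bound must genuinely exploit the special form of $M_{\overline{W}}$ produced by Birkar's construction (in particular the splitting $M_{\overline{W}}=A_{\overline{W}}+\pi_*R_{X'}$ together with the fact that the fixed part $R_W$ is controlled by the hypothesis $\mu_D(B+M)\ge 1$), and making this genuinely effective is what the ``small observation'' records.
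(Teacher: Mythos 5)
You have the division of labour between what is quoted and what is new exactly backwards. In Birkar's original \cite[Proposition 4.4]{Bir16a} the divisor $M$ is assumed nef throughout, and the bound $c$ on the coefficients of the pushdown $M_{\overline{W}}$ (the ``moreover'' clause here) is one of Birkar's stated conclusions; it therefore needs no new argument, and the paper supplies none. What is genuinely new in the statement as formulated in this paper is that conclusions (1) and (2) are asserted for $M$ \emph{not necessarily nef}, and that is exactly what the paper's accompanying remark addresses: one cannot simply quote Birkar's statement under the weakened hypotheses, one must inspect his proof and check that nefness of $M$ is used only in producing the constant $c$, i.e.\ only for the ``moreover'' part. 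Your proposal cites Birkar for (1) and (2) with the nefness hypothesis silently dropped, so the single point that actually requires justification is left unjustified, while the part you labour over is already contained verbatim in the cited result.

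Moreover, your attempted proof of the coefficient bound is not complete even on its own terms. You correctly observe that the coefficients of $M_{\overline{W}}$, equivalently $M_{\overline{W}}\cdot H^{d-1}$ for a bounded polarisation $H$, cannot be controlled by $\mathrm{vol}(M)<v$ and the boundedness of $(\overline{W},\Sigma_{\overline{W}})$ alone (your $\mathbb{P}^1\times\mathbb{P}^1$ example makes this point), but you then defer the resolution to ``tracing the fixed part through Birkar's construction'' without carrying this out; this is where the real work in Birkar's proof lies, and it is also where his nefness hypothesis is used. Finally, you misattribute the paper's ``small observation'': it is not an effectivity statement about the coefficient bound, but the remark that (1) and (2) survive without nefness. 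As it stands, the proposal neither establishes the new claim nor completes the claim it sets out to reprove.
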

Note that in the original statement of {\cite[Proposition 4.4]{Bir16a}}, $M$ is assumed to be nef. We observe from Birkar's proof that the nefness of $M$ is used only when showing the existence of $c$ and is not necessary when showing (1) and (2) of proposition \ref{birkar2}.

Now we are ready to show the main theorem of this paper.
The idea is to follow the strategy of {\cite[Proposition 7.13]{Bir16a}}, which is to construct a klt complement with coefficients in a finite set depending only on $d$, $\theta$ and $\mathscr{R}$, and then apply Theorem \ref{hx}.

\begin{thm}\label{thm4.2}
Fix a positive integer $d$, a positive real number $\theta$ and a finite set $\mathscr{R}$ of rational numbers in $[0,1]$. The set $\mathcal{D}$ of all klt weak Fano pairs $(X,B)$ satisfying
\begin{enumerate}
\item $\dim X=d$,
\item the coefficients of $B\in\Phi(\mathscr{R})$,
\item $\alpha(X,B)>\theta$,
\item $(-(K_X+B))^d>\theta$,
\end{enumerate}
forms a log bounded family.
Moreover, there is a finite set of rational numbers $I\subseteq [0,1]$ depending only on $d$, $\theta$ and $\mathscr{R}$ such that for every element $(X,B)\in\mathcal{D}$, there is a $\mathbb{Q}$-divisor $\Theta\geq B$ with a klt log Calabi--Yau pair $(X,\Theta)$.
\end{thm}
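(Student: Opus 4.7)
The plan, following the strategy of \cite[Proposition 7.13]{Bir16a} outlined by the author, is to construct for every $(X,B)\in\mathcal{D}$ a $\bQ$-divisor $\Theta\geq B$ such that $(X,\Theta)$ is a klt log Calabi--Yau pair with coefficients in a fixed finite set $I=I(d,\theta,\mathscr{R})\subseteq[0,1]$; once such $\Theta$ is constructed, the ``moreover'' clause is immediate and log boundedness of $\mathcal{D}$ reduces to applying Theorem \ref{hx} (Hacon--Xu). Indeed, $\Theta\sim_{\bQ}-K_X=-(K_X+B)+B$ is big because $-(K_X+B)$ is nef and big and $B\geq 0$, so Hacon--Xu gives boundedness of the pairs $(X,\Theta)$, and then $\Supp(B)\subseteq\Supp(\Theta)$ forces log boundedness of $\mathcal{D}$.

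The construction of $\Theta$ would proceed as follows. First, Theorem \ref{thm1} applied with $H=-(K_X+B)$ and $\overline{\tau}((X,B),H)=1$ (valid for a weak Fano pair) gives $\alpha(X,B)^{d-1}H^d\leq M(d)$, hence $H^d$ is bounded both above (by $M(d)/\theta^{d-1}$) and below (by $\theta$). Next, Theorem \ref{bddcomp} produces a positive integer $n=n(d,\mathscr{R})$ and an $n$-complement $K_X+B^+$ with $B^+\geq B$ and $(X,B^+)$ lc, whose coefficients automatically lie in the finite set $\frac{1}{n}\bZ\cap[0,1]$. Finally, choosing a sufficiently general $P\in|{-}(K_X+B)|_\bQ$ (whose existence and generality are ensured by the uniform volume lower bound applied to $h^0(X,-N(K_X+B))$ for large $N$) and a suitable rational $t\in(0,1)$, I would set
\[
\Theta\;=\;(1-t)B^+\;+\;t(B+P).
\]
Then $K_X+\Theta\sim_{\bQ}(1-t)(K_X+B^+)+t(K_X+B+P)\sim_{\bQ}0$, and $\Theta\geq B$ because $B^+\geq B$ and $P\geq 0$.

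The main obstacle is verifying that $(X,\Theta)$ is klt and that its coefficients can be pinned down in a uniform finite set. Since $\alpha(X,B)>\theta$ is compatible with $\theta\leq 1$, a general $P\in|{-}(K_X+B)|_\bQ$ does not by itself make $(X,B+P)$ klt, so the convex combination with $B^+$ is indispensable. To control the non-klt places of $(X,B^+)$, I would invoke Theorem \ref{thm2} together with Proposition \ref{birkar2} to produce a log smooth couple $(\overline{W},\Sigma_{\overline{W}})$ in a bounded family birationally dominating $(X,\Supp B^+)$; the non-klt places of $(X,B^+)$ then form finitely many divisorial valuations supported on a log resolution dominated by $\overline{W}$, and for $P$ chosen general enough with respect to this bounded data the pull-back $f^*P$ vanishes to order $0$ along each such $E$. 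The log discrepancy computation
\[
a(E,X,\Theta)\;=\;(1-t)a(E,X,B^+)+t\,a(E,X,B+P)\;=\;t\cdot a(E,X,B)\;>\;0
\]
then gives klt at $E$, while at all other divisors the strict positivity of $a(\cdot,X,B^+)$ combined with a sufficiently small $t$ and the $\alpha$-invariant estimate keep the log discrepancies strictly positive. The finiteness of $I$ is obtained a posteriori: once log boundedness of $(X,\Theta)$ is established, ACC for log canonical thresholds (or a direct construction of $\Theta$ as a $\bQ$-complement of some uniform index) pins the coefficients to finitely many rational values depending only on $d,\theta$ and $\mathscr{R}$.
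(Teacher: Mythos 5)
Your overall skeleton (build a klt log Calabi--Yau $\Theta\geq B$ and feed it to Theorem \ref{hx}, noting $\Supp B\subseteq\Supp\Theta$) is the paper's strategy, and the klt part of your construction is essentially salvageable: since $(X,B)$ is klt weak Fano, $-(K_X+B)$ is semiample by the base-point-free theorem, so a general $P=\frac1N P_N$ with $P_N\in|{-}N(K_X+B)|$ makes $(X,B+P)$ klt by Bertini, and then convexity of log discrepancies gives that $(X,(1-t)B^++t(B+P))$ is klt (your appeal to ``generality ensured by the volume lower bound'' is not a justification --- a big divisor can have every member of $|{-}N(K_X+B)|$ passing through a fixed lc centre of $(X,B^+)$; semiampleness is what saves you). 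The fatal gap is elsewhere: you have no control whatsoever on the \emph{coefficients} of $\Theta$. The integer $N$ is governed by the Cartier index of $K_X+B$ (equivalently by the denominators of $B\in\Phi(\mathscr{R})$ and of the $\bQ$-Cartier structure), which is not bounded over the family --- boundedness is exactly what is being proved. So the coefficient $t/N$ of the new component $P_N$ ranges over an infinite strictly decreasing set; the coefficient set of your $\Theta$ is therefore neither finite nor even DCC, Theorem \ref{hx} does not apply, and the ``moreover'' clause (a finite $I$ depending only on $d,\theta,\mathscr{R}$) fails outright. Your proposed repair is circular: you want to deduce finiteness of the coefficients \emph{after} log boundedness, but log boundedness was to come from Hacon--Xu, which needs the coefficient control first; and ``a direct construction of $\Theta$ as a $\bQ$-complement of uniform index'' is not a remark --- it is the entire content of the missing argument.

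That missing argument is where all the work in the paper's proof lies, and it is also where hypotheses (3) and (4) enter --- note that your construction never genuinely uses them, which should have been a warning, since without the $\alpha$ and volume bounds the statement is false (klt weak Fano varieties of dimension $d$ are not log bounded). Concretely, the paper uses $\theta$ via Theorem \ref{thm2} to get a uniform $m$ with $|K_X+\ceil{m(-K_X-B)}|$ birational and via Lemma \ref{kollar} to bound $\mathrm{vol}(\floor{m(B^+-B)})$; it then applies Proposition \ref{birkar2} to a general $M\in|\floor{m(B^+-B)}|$ to obtain a bounded log smooth couple $(\overline{W},\Sigma_{\overline{W}})$, a uniform integer $l$ with $lA_{\overline{W}}\sim G_{\overline{W}}\geq0$ whose support contains $\Sigma_{\overline{W}}\supseteq\Supp B^+_{\overline{W}}$, and \emph{integral} divisors $A,G,R$ on $X$. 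The klt log CY boundary is then $\Theta=\frac12\Delta+\frac12\Omega$, where $\Delta=B^++\frac tm A-\frac t{lm}G$ is sub-klt precisely because one subtracts a small multiple of $G$, whose support contains all lc places of $B^+$, and where $\Omega$ is a second complement (Theorem \ref{bddcomp} again) of the perturbed klt weak Fano pair $(X,B+t(G+lR+l\{m(B^+-B)\}))$, whose coefficients are checked to lie in $\Phi$ of a finite set. Every ingredient ($t,l,m,n'$) is uniform, which is exactly how the coefficients of $\Theta$ land in a finite set $I$. Your proposal replaces this machinery with an uncontrolled general member of $|{-}(K_X+B)|_{\bQ}$, and that is the step that cannot work.
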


\begin{proof}
By Theorem \ref{hx}, it is enough to show the existence of $I$.

By Lemma \ref{qfact}, replacing by a small $\bQ$-factorialisation of $X$, we may assume $X$ is $\bQ$-factorial.

By Theorem \ref{bddcomp}, there is a positive integer $n$ depending only on $d$ and $\mathscr{R}$, such that there is an $n$-complement $K_X+B^+$ of $K_X+B$ such  that $B^+\geq B$. If $(X,B^+)$ is klt, then the theorem follows by Theorem \ref{hx}. So we may assume $(X,B^+)$ is not klt.

Let $m$ be given by Theorem \ref{thm2} such that  $|K_X+\ceil{m(-K_X-B)}|$ defines a birational map. Replacing $n$ and $m$ by $2mn$, we may assume $n=m>1$. Since $1$ is the only possible accumulating point of $\Phi(\mathscr{R})$, there are only finitely many elements in $\Phi(\mathscr{R})\cap[0,\frac{m-1}{m}]$. Let $I$ be a positive integer such that $\Phi(\mathscr{R})\cap[0,\frac{m-1}{m}]\subseteq \frac{1}{I}\mathbb{Z}$. Replacing $m$ by $Im$, we may assume that $I$ divides $m$ and $\mathrm{Supp}(\{m(B^+-B)\})\subseteq\mathrm{Supp}\floor{B^+}\cap$Supp$B$. Note that now $K_X+B^+$ is also an $\frac{m}{I}$-complement of  $K_X+B$.

Note that this also implies $-B^++\ceil{m(B^+-B)}\leq\floor{m(B^+-B)}$. Since $K_X+B^+$ is an $m$-complement, we have $mK_X\sim mB^+$. Hence, $|m\floor{m(B^+-B)}|$ defines a birational map. Since $m\floor{m(B^+-B)}\leq \floor{m^2(B^+-B)}$, replacing $m$ by $m^2$, we may assume $|\floor{m(B^+-B)}|$ defines a birational map. On the other hand, by Lemma \ref{kollar}, vol$(\floor{m(B^+-B)})<v$ for some $v$ depending only on $d$, $m$ and $\theta$.

Let $M$ be a general element of $|\floor{m(B^+-B)}|$. By Proposition \ref{birkar2}, there is a bounded set of couples $\mathcal{P}$ depending only on $d$, $m$ and $\theta$, such that there is a projective log smooth couple $(\overline{W}, \Sigma_{\overline{W}})\in \mathcal{P}$, a birational map $\overline{W}\dashrightarrow X$ and a common resolution $X'$  of this map such that
\begin{enumerate}
\item Supp$\Sigma_{\overline{W}}$ contains the exceptional divisor of $\overline{W}\dashrightarrow X$ and the birational transform of Supp$(B^++M)$,
\item there is a resolution $\phi : W \rightarrow X$ such that $M_W:=M|_W\sim A_W+R_W$ where $A_W$ is the movable part of $|M_W|$, $|A_W|$ is base point free, $X'\rightarrow X$ factors through $W$ and $A_{X'}:=A_W|_{X'}\sim 0/\overline{W}$.
\end{enumerate}
Since $M$ is a general element of $|\floor{m(B^+-B)}|$, we may assume $M_{W}=A_{W}+R_W$ and $A_{W}$ is general in $|A_{W}|$. 
In particular, if $A_{\overline{W}}$ is the pushdown of $A_{W}|_{X'}$ to $\overline{W}$, then $A_{\overline{W}}\leq\Sigma_{\overline{W}}$.
Let $M$, $A$, $R$ be the pushdowns of $M_W$, $A_W$, $R_W$ to $X$.



Since $|A_{\overline{W}}|$ defines a birational contraction and $A_{\overline{W}}\leq\Sigma_{\overline{W}}$, there exists $l\in \mathbb{N}$ depending only on $\mathcal{P}$ such that $lA_{\overline{W}}\sim G_{\overline{W}}$ for some $G_{\overline{W}}\geq 0$ whose support contains $\Sigma_{\overline{W}}$. Let $K_{\overline{W}}+B^+_{\overline{W}}$ be the crepant pullback of $K_X+B^+$ to ${\overline{W}}$. Then $(\overline{W},B^+_{\overline{W}})$ is sub-lc and
$$\text{Supp}B^+_{\overline{W}}\subseteq\text{Supp}\Sigma_{\overline{W}}\subseteq \text{Supp}G_{\overline{W}}.$$
Let $G$ be the pushdown of $G_{X'}:=G_{\overline{W}}|_{X'}$ to $X$. 
Since $A_{X'}$ is the pullback of $A_{\overline{W}}$, $lA_{X'}\sim G_{X'}$ and $lA\sim G$. Therefore, $G+lR+l\{m(B^+-B)\}\sim_{\bQ}m(B^+-B)$.

Take a positive rational number $t\leq (lm)^{-d}\theta$, then \[(X,B+t(G+lR+l\{m(B^+-B)\}))\] is klt. Moreover, we have
\[-K_X-B-t(G+lR+l\{m(B^+-B)\})\sim_{\bQ}B^+-B-t(lm(B^+-B)).\]
By replacing $t$, we may assume $t<\frac{1}{lm}$.
Since 
\[B^+-B-t(lm(B^+-B))=(1-tlm)(B^+-B)\geq 0,\]
$B^+-B-t(lm(B^+-B))$ is nef and big. Therefore \[(X,B+t(G+lR+l\{m(B^+-B)\}))\] is klt weak Fano.

Now we argue that the coefficients of $B+t(G+lR+l\{m(B^+-B)\})$ are in a set of hyperstandard multiplicities associated to a finite set.
Write \[B+tl\{m(B^+-B)\}=(1-tlm)B+tlmB^+-tl\floor{m(B^+-B)}.\]
Denote $B^{\leq 1-\frac{1}{m}}$ to be the sum of irreducible components of $B$ with coefficients at most $1-\frac{1}{m}$ and $B^{> 1-\frac{1}{m}}=B-B^{\leq 1-\frac{1}{m}}$.
Since $1$ is the only possible accumulating point of $\Phi(\mathscr{R})$, $\mathscr{S}:=\Phi(\mathscr{R})\cap[0,\frac{m-1}{m}]$ is a finite set.
Therefore, the coefficients of $B^{\leq 1-\frac{1}{m}}\in\mathscr{S}$.
On the other hand, if $D$ is a component of $B^{> 1-\frac{1}{m}}$, then by assumption, $\mu_D(B)=1-\frac{b}{m'}>1-\frac{1}{m}$ for some $b\in\mathscr{R}$ and $m'\in\mathbb{N}$.
Since $B^+$ is an $m$-complement,  $\mu_D(B^+)\in \frac{1}{m}\mathbb{N}$. So $\mu_D(B^+)=1$ and $\mu_D(\floor{m(B^+-B)})=0.$
Then we have
\[\mu_D((1-tlm)B^{> 1-\frac{1}{m}}+tlmB^+-tl\floor{m(B^+-B)})\]\[=(1-tlm)(1-\frac{b}{m'})+tlm=1-\frac{(1-tlm)b}{m'}.\]
Therefore, the coefficients of $(1-tlm)B^{> 1-\frac{1}{m}}+tlmB^+-tl\floor{m(B^+-B)}\in\Phi((1-tlm)\mathscr{R})\cup tl\mathbb{N}$ since $B^+$ and $\floor{m(B^+-B)}$ are integral.
Since $B+tl\{m(B^+-B)\}=(1-tlm)B+tlmB^+-tl\floor{m(B^+-B)}=(1-tlm)B^{\leq 1-\frac{1}{m}}+(1-tlm)B^{> 1-\frac{1}{m}}+tlmB^+-tl\floor{m(B^+-B)}$, the coefficients of $B+tl\{m(B^+-B)\}\in((1-tlm)\mathscr{S}\cup\{0\}+tl\mathbb{N})\cup \Phi((1-tlm)\mathscr{R})$.
Consequently, since $G$ and $R$ are integral ,the coefficients of $B+t(G+lR+l\{m(B^+-B)\})$ belongs to
\[((1-tlm)\mathscr{S}\cup\Phi((1-tlm)\mathscr{R})\cup\{0\})+t(\mathbb{N}\cup\{0\}).\]
Moreover, since $(X,B+t(G+lR+l\{m(B^+-B)\}))$ is klt, the coefficients of $B+t(G+lR+l\{m(B^+-B)\})$ is less than $1$.
Since $(1-tlm)\mathscr{S}$ and $(1-tlm)\mathscr{R}$ are both finite, the only possible accumulating point of $\mathscr{T}:=((1-tlm)\mathscr{S}\cup\Phi((1-tlm)\mathscr{R})\cup\{0\})$ is $1$.
Thus, $(\mathscr{T}+t(\mathbb{N}\cup\{0\}))\cap [0,1]=\mathscr{T}\cup((\mathscr{T}\cap[0,1-t])+t(\mathbb{N}\cup\{0\}))\cap[0,1]=\mathscr{T}\cup\mathscr{Q}$, where $\mathscr{Q}:=((\mathscr{T}\cap[0,1-t])+t(\mathbb{N}\cup\{0\}))\cap[0,1]$ is a finite set.
The coefficients of $B+t(G+lR+l\{m(B^+-B)\})$ belongs to $\mathscr{T}\cup\mathscr{Q}=(1-tlm)\mathscr{S}\cup\Phi((1-tlm)\mathscr{R})\cup\{0\}\cup\mathscr{Q}\subseteq \Phi((1-tlm)\mathscr{R}\cup \mathscr{P})$, where $\mathscr{P}:=\{1\}-((1-tlm)\mathscr{S}\cup\{0\}\cup\mathscr{Q})$ is a finite set.

By Theorem \ref{bddcomp}, there is a positive integer $n'$ depending only on $d$, $\mathscr{R}$ $m$, $l$ and $t$ such that there is an $n'$-complement $K_X+\Omega$ of $K_X+B+t(G+lR+l\{m(B^+-B)\})$, such that
\[\Omega\geq B+t(G+lR+l\{m(B^+-B)\}).\]

On the other hand, let
$$\Delta_{\overline{W}}:=B^+_{\overline{W}}+\frac{t}{m}A_{\overline{W}}-\frac{t}{lm}G_{\overline{W}}.$$
Then $(\overline{W},\Delta_{\overline{W}})$ is sub-$\epsilon$-klt for some $\epsilon>0$ depending only on $\mathcal{P}$, $t$, $l$ and $m$ since $\mathrm{Supp}B^+_{\overline{W}}\subseteq\mathrm{Supp}\Sigma_{\overline{W}}\subseteq\mathrm{Supp}G_{\overline{W}}$, $(\overline{W}, \Sigma_{\overline{W}})$ is log smooth, $(\overline{W},B^+_{\overline{W}})$ is sub-lc, and $A_{\overline{W}}$ is not a component of $\ceil{B^+_{\overline{W}}}$. Moreover, $K_{\overline{W}}+\Delta_{\overline{W}}\sim_{\bQ} 0$.

Let
$$\Delta:=B^++\frac{t}{m}A-\frac{t}{lm}G.$$
Then $K_X+\Delta\sim_{\bQ} 0$. $(X,\Delta)$ is sub-klt since $K_X+\Delta$ is the crepant pullback of $K_{\overline{W}}+\Delta_{\overline{W}}$.

Let $\Theta=\frac{1}{2}\Delta+\frac{1}{2}\Omega$. Then
$$\Theta=\frac{1}{2}B^++\frac{t}{2m}A-\frac{t}{2lm}G+\frac{1}{2}\Omega$$
$$\geq\frac{1}{2}B^++\frac{t}{2m}A-\frac{t}{2lm}G+\frac{1}{2}B+\frac{t}{2}(G+lR)\geq-\frac{t}{2lm}G+\frac{t}{2}G\geq 0.$$

Since $(X,\Delta)$ is sub-klt, $K_X+\Delta\sim_{\bQ} 0$ and $(X,\Omega)$ is lc log Calabi--Yau, $(X,\Theta)$ is klt log Calabi--Yau. The coefficients of $\Theta$ belong to a fixed finite set $I$ depending only on $t$, $l$, $m$ and $n'$. Moreover, Supp$B\subseteq$Supp$\Omega\subseteq$Supp$\Theta$. By Theorem \ref{hx}, we are done.
\end{proof}

\begin{thm}\label{main}
Fix a positive integer $d$, positive real numbers $\theta$ and $\delta$ and a finite set $\mathscr{R}$ of rational numbers in $[0,1]$. The set of all klt Fano pairs $(X,B)$ satisfying
\begin{enumerate}
\item $\dim X=d$,
\item the coefficients of $B\in\Phi(\mathscr{R})$,
\item $\alpha(X,B)^{d-1+\delta}(-(K_X+B))^d>\theta$,
\end{enumerate}
forms a log bounded family.
\end{thm}

\begin{proof}
This follows from Theorem \ref{excbdd}, Theorem \ref{thm1} and Theorem \ref{thm4.2}.
\end{proof}

\begin{cor}
Fix a positive integer $d$ and two positive real numbers $\delta$ and $\theta$. Then the set of klt-Fano varieties $X$ satisfying
\begin{enumerate}
\item $\dim X=d$,
\item $\alpha (X)^{d-1+\delta}(-K_X)^d>\theta$,
\end{enumerate}
forms a bounded family.
\end{cor}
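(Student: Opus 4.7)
The plan is to derive this as an immediate specialization of Theorem \ref{main} to the case of the trivial boundary. I would first choose $\mathscr{R} = \{1\}$, which is a finite set of rational numbers in $[0,1]$. Since $0 = 1 - \tfrac{1}{1}$ lies in $\Phi(\mathscr{R})$, the zero divisor on any variety vacuously has coefficients in $\Phi(\mathscr{R})$.

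Next, given a $d$-dimensional klt Fano variety $X$ satisfying $\alpha(X)^{d-1+\delta}(-K_X)^d > \theta$, I would consider the pair $(X, 0)$. By the convention $\alpha(X) = \alpha(X, 0)$ recorded in the definition of the $\alpha$-invariant, the pair $(X,0)$ is a klt Fano pair fulfilling exactly the three hypotheses of Theorem \ref{main} with the given $d$, $\theta$, $\delta$ and with $\mathscr{R} = \{1\}$. Theorem \ref{main} then yields that the family $\{(X, 0)\}$ of such pairs is log bounded.

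Finally, log boundedness of a collection of couples $(X, D)$ with $D = 0$ reduces, by the definition recalled in Section \ref{sec.bdd}, to ordinary boundedness of the underlying varieties $X$, which is the statement of the corollary. The main difficulty, of course, is hidden in Theorem \ref{main} rather than in this deduction: once that theorem is available there is essentially nothing left to prove, so no genuine obstacle arises at this step beyond verifying that the zero boundary satisfies the hyperstandard coefficient condition, which the choice $\mathscr{R} = \{1\}$ handles.
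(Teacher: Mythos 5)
Your proposal is correct and is essentially the paper's own argument: the paper deduces the corollary by "letting $B=0$ in Theorem \ref{main}" (one of the two routes indicated in its one-line proof), and your choice $\mathscr{R}=\{1\}$ so that $0=1-\tfrac{1}{1}\in\Phi(\mathscr{R})$ just makes the coefficient hypothesis explicit. The reduction from log boundedness of the pairs $(X,0)$ to boundedness of the varieties $X$ is exactly as you describe, so nothing further is needed.
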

\begin{proof}
This follows either from the previous theorem or from Theorem \ref{excbdd}, Theorem \ref{thm1} and Theorem \ref{jiangbdd}.
\end{proof}

\appendix

\end{document}